\DeclareMathAlphabet{\mathbx}{U}{BOONDOX-ds}{m}{n}
\SetMathAlphabet{\mathbx}{bold}{U}{BOONDOX-ds}{b}{n}
\DeclareMathAlphabet{\mathbbx} {U}{BOONDOX-ds}{b}{n}
\DeclarePairedDelimiterX{\dotp}[2]{\langle}{\rangle}{#1, #2} 
\newcommand\myshade{85}
\definecolor{mylinkcolor}{HTML}{3f87d3}
\definecolor{mycitecolor}{HTML}{ee7c27}
\definecolor{myurlcolor}{rgb}{1,0.5,0}
\providecommand{\doi}[1]{\url{https://doi.org/{#1}}}
\newcounter{tikzfigures}
\newcommand{\ie}{i.e.}
\newcommand{\R}{\mathbb{R}}
\newcommand{\Rp}{\R_{\geq 0}}
\newcommand{\onen}{[n]}
\newcommand{\cone}{{\mathrm{cone}}}
\newcommand{\conv}{{\mathrm{conv}}}
\newcommand{\Scal}{\mathcal{S}}
\newcommand{\Ccal}{\mathcal{C}}
\DeclareMathOperator{\arcenciel}{\mathrm{colint}}
\newcommand{\mediumcup}[1]{{\textstyle \bigcup\limits_{#1}}}
\newcommand{\mediumcap}[1]{{\textstyle \bigcap\limits_{#1}}}
\newcolumntype{L}[1]{>{\raggedright\let\newline\\\arraybackslash\hspace{0pt}}m{#1}}
\newcolumntype{C}[1]{>{\centering\let\newline\\\arraybackslash\hspace{0pt}}m{#1}}
\newcolumntype{R}[1]{>{\raggedleft\let\newline\\\arraybackslash\hspace{0pt}}m{#1}}
\spnewtheorem{definition}{Definition}{\bfseries}{\rmfamily}
\DeclareMathOperator{\interior}{\operatorname{int}}
\newcommand{\Rn}{\mathbb{R}^n}
\newcommand{\trop}{\textrm{trop}}
\let\emptyset\varnothing
\newcommand{\mapping}{P}
\newcommand{\LP}{\mathrm{LP}}
\newcommand{\reviewone}[1]{}
\newcommand{\reviewtwo}[1]{}
\newcommand{\Sbf}{\mathbf{S}}
\begin{document}
\title{A convex programming approach\\ to solve posynomial systems}
\titlerunning{A convex programming approach to solve posynomial systems}

\author{Marianne Akian \and Xavier Allamigeon \and Marin Boyet \and St\'ephane Gaubert}
\tocauthor{Marianne Akian, Xavier Allamigeon, Marin Boyet and St\'ephane Gaubert}
\institute{INRIA and CMAP, \'Ecole polytechnique, IP Paris, CNRS\\ \email{firstname.lastname@inria.fr}\\
}

\maketitle              
\begin{abstract}
  We exhibit a class of classical or tropical posynomial systems
  which can be solved by reduction to linear or convex programming problems.
  This relies on a notion of colorful vectors with respect to a collection
  of Newton polytopes. This extends the convex programming approach
  of one player stochastic games. 
\end{abstract}

\def\additionalproofs{1}

\section{Introduction}

A \emph{posynomial} is a function of the form
\[ P (x) = \sum_{a\in A} c_a x_1^{a_1}x_2^{a_2}\cdots x_n^{a_n} \]
where the variable
$x=(x_1,\dots,x_n)$
is a vector with real positive entries,
$A$ is a finite subset of vectors of $\Rn$, and
    the $c_a$ are positive real numbers.
    Here for any $a\in \Rn$, we denote by $a_i$ the $i$-th coordinate of $a$.
     The set $A$ is called the \emph{support} of $P$, also denoted by $S_P$, its elements are called the \emph{exponents} of the posynomial and the $c_a$ its \emph{coefficients}.

    Unlike polynomials, posynomials can have arbitrary exponents.
    They arise in convex optimization, especially in geometric and entropic programming~\cite{chandrasekaran} and in polynomial optimization~\cite{positivstellensatz}. They also arise in the theory of nonnegative tensors~\cite{lim05,shmuel}, in risk sensitive control~\cite{anantharam} and game theory~\cite{egames}.

A \emph{tropical posynomial} is a function of the form
\[ P^{\trop}(x) = \max_{a\in A}\left(c_a + \dotp{a}{x}\right)\]
where $\dotp{\cdot}{\cdot}$ is the usual dot product of $\Rn$,
the $c_a$ are now real coefficients, and $x=(x_1,\dots,x_n)$ can take
its values in $\R^n$.
The terminology used comes from the \emph{tropical} (or max-plus) \emph{semi-field}, whose additive law is the maximum and the multiplicative law is the usual sum.

In this paper, we are interested in solving (square) classical posynomial systems, that are of the form
    \begin{align}
      P_i(x) &= 1 \quad \text{for all} \; i \in \onen\coloneqq\{1,\dots,n\}\label{eq:classical}
    \end{align}
    with $x\in (\R_{>0})^n$, and the $P_i$ are classical posynomials.
    We will also study the tropical counterpart, 
    \begin{align}
      P^{\trop}_{i}(x) &= 0 \quad \text{for all} \; i \in \onen\label{eq:tropical}
    \end{align}
    with now $x\in \R^n$, and the $P_i^{\trop}$ are tropical posynomials (hereafter we shall write $P_i$ instead of $P_i^{\trop}$, for brevity).
    The optimality equations of Markov decision processes~\cite{puterman2014markov} are special
    cases of tropical posynomial systems.
    More general tropical posynomial systems
    arise in the performance analysis of timed
discrete event systems, see~\cite{emergency15}. 

Solving (square) posynomial systems is
in general NP-hard (\Cref{sec:complexity}).
However, we identify a tractable subclass.
The tropical
version can be solved exactly in polynomial time by reduction to a linear program (\Cref{sec:solveTropPosyn}), whereas the classical version can be solved approximatedly by reduction
to a geometric program (\Cref{sec:realPosy}). Our approach is based on a notion of colorful
interior of a collection of cones. A point is in the colorful interior
if it is a positive linear combination of vectors of these cones, and
if at least one vector of every cone is needed in such a linear combination.
Our reductions are valid when the colorful interior of the cone generated
by the supports of the posynomials is nonempty, and when a point in this
interior is known. As special cases, we recover the linear
programming formulation of Markov decision processes, and the geometric
programming formulation of risk sensitive problems. Properties of the colorful interior and related open problems are discussed in Section~\ref{sec:colorful}.

\section{Solving posynomial systems is NP-hard}
\label{sec:complexity}

The following two results show that the feasibility problems for classical or tropical posynomial systems are NP-hard, even with integer exponents.

\begin{proposition}
  \label{prop:nphard}
  Solving a square tropical posynomial system is NP-hard.
\end{proposition}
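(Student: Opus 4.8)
The plan is to give a polynomial-time reduction from $3$-SAT, exploiting the disjunctive nature of a tropical equation: the constraint $\max_{a}(c_a + \dotp{a}{x}) = 0$ holds exactly when $c_a + \dotp{a}{x} \le 0$ for every $a$ \emph{and} $c_a + \dotp{a}{x} = 0$ for at least one $a$. So a single tropical posynomial equation lets us simultaneously impose a conjunction of affine inequalities and a disjunction of affine equalities; the inequalities will encode clauses and the disjunctions will encode the ``a variable is $0$ or $1$'' constraint.

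Given a $3$-CNF formula $\varphi$ with boolean variables $v_1,\dots,v_n$ and clauses $C_1,\dots,C_m$, I would build a \emph{square} system of $n+1$ tropical posynomials in $n+1$ real variables $x_1,\dots,x_{n+1}$. For each $j\in\onen$, take the \emph{boolean gadget}
\[ \max\bigl(x_j - 1,\ -x_j\bigr) = 0 . \]
The inequality part forces $0\le x_j\le 1$ and the ``attained'' part forces $x_j\in\{0,1\}$, so the solution set of this equation is exactly $\{\,x : x_j\in\{0,1\}\,\}$; the two exponent vectors $e_j$ and $-e_j$ are integral. One needs a separate such equation for each $j$, since merging two boolean gadgets into a single $\max$ would only force one of the four equalities. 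For the clauses, encode a literal by $t(v_p)=x_p$ and $t(\bar v_p)=1-x_p$, so that on a $\{0,1\}$-valued point a clause $C_k=\ell_{k1}\vee\ell_{k2}\vee\ell_{k3}$ is satisfied iff the affine form $L_k(x)\coloneqq 1 - t(\ell_{k1}) - t(\ell_{k2}) - t(\ell_{k3})$ satisfies $L_k(x)\le 0$; each $L_k$ has exponents in $\{-1,0,1\}$ and an integral constant term. All $m$ of these inequalities are collapsed into the single $(n+1)$-st equation, which also pins the spare variable:
\[ \max\bigl(x_{n+1},\ -x_{n+1},\ L_1(x),\dots,L_m(x)\bigr) = 0 . \]
Here the terms $x_{n+1}$ and $-x_{n+1}$ force $x_{n+1}=0$ and then automatically attain the maximum, so this equation is equivalent to ``$L_k(x)\le 0$ for all $k$'' with no spurious solution introduced by the disjunction. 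The resulting system has $n+1$ equations in $n+1$ unknowns, integer exponents throughout, and size polynomial in $|\varphi|$.

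It then remains to check the equivalence. If $\varphi$ is satisfiable, setting $x_j$ to the truth value of $v_j$ and $x_{n+1}=0$ satisfies every boolean gadget and makes each $L_k(x)\le 0$, hence solves the system. Conversely, in any solution the boolean gadgets force $x_j\in\{0,1\}$ for $j\in\onen$, and the last equation forces $L_k(x)\le 0$ for all $k$; thus $v_j\coloneqq x_j$ is a satisfying assignment of $\varphi$. Since $3$-SAT is NP-hard and the reduction runs in polynomial time, solving square tropical posynomial systems is NP-hard.

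The construction is short, and the only delicate point — the one place the reduction could break — is keeping the system \emph{square} while preserving equivalence: a naive reduction produces $n+m+1$ constraints on $n$ variables, and the disjunctive ``at least one term attains the max'' clause could validate a fractional point. Both difficulties are handled by the same device: packing all clause inequalities into one equation, and pinning a genuine variable $x_{n+1}$ (rather than a bare constant) inside that equation so that its ``attained'' obligation is discharged for free.
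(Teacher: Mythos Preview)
Your reduction is correct and indeed more economical than the paper's. The paper also reduces from \texttt{3-SAT}, but it introduces for each variable an explicit negation variable $y_i$ (enforcing $\max(x_i-1,y_i-1)=0$ and $x_i+y_i-1=0$), and for each clause a separate pair of equations with two fresh variables $z_j,s_j$ (one equation pinning $z_j$ to the value of the clause, another forcing $z_j\ge 1/2$), yielding a $(2n+2p)\times(2n+2p)$ system. Your single gadget $\max(x_j-1,-x_j)=0$ forces $x_j\in\{0,1\}$ directly without a negation variable, and your device of packing all clause inequalities into one equation whose ``attained'' obligation is discharged by the pinned slack $x_{n+1}$ compresses the whole clause block to a single row, giving an $(n+1)\times(n+1)$ system. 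What the paper's more modular construction buys is a cleaner parallel with the classical (non-tropical) posynomial reduction that immediately follows it, where each clause again gets its own equation; your aggregated clause equation would not dequantize as transparently. But for the tropical statement on its own, your argument is complete as written.
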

\begin{proof}
  We reduce \texttt{3-SAT} to the problem~\eqref{eq:tropical}. Let us consider a Boolean formula in conjunctive normal form $C_1\land \dots \land C_p$ made of $p$ clauses, each one of them using three out of $n$ real variables $x_1,\dots,x_n$ ($p,n\in\mathbb{N}$).

We introduce the following tropical posynomial system in the $2n+2p$ variables $(x_1,\dots,x_n,y_1,\dots,y_n,z_1\dots,z_p,s_1,\dots,s_p)$, with the same number of equations:
\vskip-1.7ex

\begin{align*}
   \forall i\in\onen \;\; & \max(x_i-1,y_i-1)=0 \,, \!\!&& x_i+y_i-1=0\,,  \label{e-neg}\\[1ex]
   \forall j\in[p]   \;\; & \max\Big( \underset{x_i\in C_j}{\max}(x_i-z_j), \!\!\underset{\neg x_i\in C_j}{\max}(y_i-z_j)\Big) = 0\,, \!\!&&\max(\tfrac{1}{2}-z_j,s_j-z_j)=0\, .
\end{align*}
This system can be constructed in polynomial time from the Boolean formula. The first $2n$ equations ensure that for all $i\in\onen$, $x_i\in\{0,1\}$ and that $x_i$ and $y_i$ have opposite logical values. The next $p$ equations express that for all $j\in[p]$, the variable $z_j$ has the same Boolean value as the clause $C_j$, with the notation $x_i\in C_j$ (resp.\ $\neg x_i\in C_j$)
if the variable $x_i$ occurs positively (resp.\ negatively) in the clause $C_j$. The last equations ensure that $z_j = 1$ for all $j \in [p]$.
The instance $C_1 \land \dots \land C_p$ is satisfiable if and only if this system admits a solution.
\end{proof}

\begin{theorem}
  Solving a square classical posynomial system is NP-hard.
  \end{theorem}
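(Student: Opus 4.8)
The plan is to mimic the reduction from \texttt{3-SAT} used in \Cref{prop:nphard}, replacing the tropical gadgets by classical posynomial ones. Consider a formula $C_1 \wedge \dots \wedge C_p$ in conjunctive normal form, each clause involving three of the Boolean variables $x_1,\dots,x_n$. The main idea is to encode a Boolean variable by a single positive real variable $t_i$ subject to the equation $\tfrac{2}{5} t_i + \tfrac{2}{5} t_i^{-1} = 1$. Its left-hand side is a posynomial (positive coefficients, integer exponents $\pm 1$), and the equation is equivalent to $t_i^2 - \tfrac{5}{2} t_i + 1 = 0$, hence forces $t_i \in \{2, \tfrac12\}$; we read $t_i = 2$ as ``true'' and $t_i = \tfrac12$ as ``false''. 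This plays the role of the two equations $\max(x_i-1,y_i-1) = 0$, $x_i + y_i - 1 = 0$ of \Cref{prop:nphard}: a quadratic-type relation, which posynomials can express precisely because they allow the monomials $t_i$ and $t_i^{-1}$ simultaneously.

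To each literal $\ell$ occurring in a clause I would associate the monomial $u_\ell \coloneqq t_i$ if $\ell = x_i$ and $u_\ell \coloneqq t_i^{-1}$ if $\ell = \neg x_i$, so that, once the $t_i$ are binary, $u_\ell \in \{2,\tfrac12\}$ with $u_\ell = 2$ iff $\ell$ is satisfied. A clause $C_j$ with literals $\ell_1,\ell_2,\ell_3$ is then violated precisely when $u_{\ell_1} = u_{\ell_2} = u_{\ell_3} = \tfrac12$, i.e.\ when $u_{\ell_1} u_{\ell_2} u_{\ell_3} = \tfrac18$, whereas if it is satisfied this product belongs to $\{\tfrac12, 2, 8\}$. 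Introducing a slack variable $s_j > 0$ and the equation $\tfrac14\, u_{\ell_1}^{-1} u_{\ell_2}^{-1} u_{\ell_3}^{-1} + s_j = 1$ (again a posynomial: a degree-$3$ monomial in the $t_i$ with coefficient $\tfrac14$, plus the monomial $s_j$), any solution must satisfy $s_j = 1 - \tfrac14 (u_{\ell_1} u_{\ell_2} u_{\ell_3})^{-1} > 0$, that is $u_{\ell_1} u_{\ell_2} u_{\ell_3} > \tfrac14$; since the threshold $\tfrac14$ lies strictly between $\tfrac18$ and $\tfrac12$, this rules out exactly the violating case and, conversely, is always achievable (by a suitable $s_j \in (0,1)$) when $C_j$ is satisfied.

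I would then output the square system in the $n + p$ variables $(t_1,\dots,t_n,s_1,\dots,s_p)$ consisting of the $n$ Boolean equations and the $p$ clause equations above; it is built in polynomial time, and uses only integer exponents and positive rational coefficients. It remains to check the two directions of the equivalence. From a satisfying assignment, set $t_i \in \{2,\tfrac12\}$ accordingly and $s_j \coloneqq 1 - \tfrac14(u_{\ell_1}u_{\ell_2}u_{\ell_3})^{-1}$, which lies in $(0,1)$ because the clause is satisfied; all equations then hold. Conversely, a positive solution forces each $t_i$ into $\{2,\tfrac12\}$, hence defines a Boolean assignment, and each clause equation then forces $u_{\ell_1}u_{\ell_2}u_{\ell_3} > \tfrac14$, hence at least one literal of $C_j$ true. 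So the instance is satisfiable iff the system has a solution, which yields NP-hardness.

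The step I expect to be the only genuine obstacle is the design of the gadgets so that \emph{everything remains a true posynomial} — all coefficients positive, no subtraction — while still simulating the Boolean logic exactly: this is why the binary constraint is written through $t + t^{-1}$ rather than a factored quadratic, why the slack appears with a $+$ sign on the left-hand side, and why the clause constraint is written multiplicatively with a threshold strictly separating the ``all-false'' product value from the ``at-least-one-true'' ones, which also circumvents the boundary-equality difficulties a naive additive encoding would create.
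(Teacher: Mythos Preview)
Your reduction is correct and follows the same overall strategy as the paper: a polynomial-time reduction from \texttt{3-SAT} in which Boolean values are encoded by the two roots $\{2,\tfrac12\}$ of $\tfrac{2}{5}t+\tfrac{2}{5}t^{-1}=1$, and each clause is enforced by a posynomial threshold gadget with a slack variable.

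The implementation details differ slightly. The paper keeps the negated literal as a separate variable $y_i$ (with the two equations $\tfrac{2}{5}x_i+\tfrac{2}{5}y_i=1$ and $x_iy_i=1$) and encodes a clause \emph{additively}, introducing $z_j=\tfrac16\sum(\text{literals})\in\{\tfrac14,\tfrac12,\tfrac34,1\}$ and then the threshold equation $\tfrac13 z_j^{-1}+s_jz_j^{-1}=1$; this gives a square system of size $2n+2p$. You instead fold negation into the exponent $t_i^{-1}$, encode the clause \emph{multiplicatively} via the product $u_{\ell_1}u_{\ell_2}u_{\ell_3}\in\{\tfrac18,\tfrac12,2,8\}$, and separate the failing value with a single equation, yielding a more compact square system of size $n+p$. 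Both gadgets are genuine posynomials and both threshold choices cleanly separate the satisfied/unsatisfied cases, so the arguments are interchangeable; yours simply uses fewer variables.
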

\begin{proof}
  We modify the previous construction to
  obtain a square posynomial system over $\R_{> 0}^{2n+2p}$,
  along the lines of Maslov's dequantization principle~\cite{litvinov2007maslov} or Viro's method~\cite{viro}: 
  \begin{align*}
    \forall i\in\onen \quad & \tfrac{2}{5}x_i + \tfrac{2}{5}y_i = 1 \,, \!\!&&\;\; x_i y_i =1\,,\\
    \forall j\in[p]   \quad & \sum_{x_i\in C_j}\tfrac{1}{6}x_i z_j^{-1} + \!\sum_{\neg x_i\in C_j}\tfrac{1}{6}y_i z_j^{-1} = 1 \,, \!\!&&\;\; \tfrac{1}{3}z_j^{-1}+s_jz_j^{-1}=1 \,.
  \end{align*}
  
From the first $2n$ equations, the variables $x_i$ and $y_i$ range over $\{2,1/2\}$, the values $2$ and $1/2$ respectively encode the true and false Boolean values. The variable $y_i = 1/x_i$ corresponds to the Boolean negation of $x_i$. 
Since each clause has precisely three literals,
using the $p$ next equations, we deduce that
the variable $z_j$ takes one of the values $\{1/2,3/4,1\}$
if the clause $C_j$ is satisfied, and that it takes the value $1/4$ otherwise.
The last $p$ equations impose that $z_j$ can take any value in $(1/3,\infty)$.
We deduce that the formula $C_1 \land \dots \land C_p$ is satisfied if and only if the posynomial system that we have obtained in this way admits a solution in $\R_{> 0}^{2n+2p}$.
\end{proof}

\section{A linear programming approach to solve tropical posynomial systems}\label{sec:solveTropPosyn}

Given tropical posynomials $P_1, \dots, P_n$, we write the system~\eqref{eq:tropical} as $P(x) = 0$, where $P \coloneqq (P_1, \dots, P_n)$. The \emph{support} of this system, denoted $\Sbf$, is defined as the disjoint union $\biguplus_{i \in \onen} S_{P_i}$ of the supports of the posynomials $P_i$. By {\em disjoint union}, we mean the coproduct in the category of sets (these supports may have non-empty intersections, and they may even coincide).

\begin{definition}\label{def:colorfulVector}
We say that a vector $y$ in the (convex) conic hull $\cone(\Sbf)$ is \emph{colorful} if, for all $\mu \in (\Rp)^\Sbf$,
\[
y=\sum_{a \in \Sbf} \mu_a\, a \implies  \forall i\in \onen \, , \; \exists a\in S_{P_i} \, , \; \mu_a > 0 \, .
\]
\end{definition}
In other words, a vector $y\in\Rn$ is colorful if  it arises as a nonnegative combination of the  exponents of $P$, but also if all such combinations make use of at least one exponent of each of the tropical posynomials $P_1,\dots,P_n$.

In this way, if we think of $S_{P_1},\dots,S_{P_n}$ as colored sets, we need all the colors to decompose a colorful vector $y$ over these. Moreover, by Carath\'eodory's theorem, every vector in the conic hull $\cone(\Sbf)$ can be written as a positive linear combination of an independent family of vectors of $\Sbf$. Hence, when $y$ is a colorful vector, it is obtained as a positive linear combination of precisely one vector $a_i$ in each color class $S_{P_i}$, and the family $a_1,\dots,a_n$ must be a basis. (If not, Carath\'eodory's Theorem would imply that $y$ is a positive linear combination of a proper subset of $\{a_1, \dots, a_n\}$, so that $y$ could not be a colorful vector.)

Given a vector $y$, we consider the following linear program:
\begin{equation}
\textrm{Maximize} \quad \dotp{y}{x} \quad \textrm{subject to} \quad \forall a \in \Sbf \, , \; c_a + \dotp{a}{x} \leq 0 \, .   
\tag{$\LP(y)$} \label{LP}
\end{equation}
Remark that the feasibility set of this linear program consists of the vectors $x \in \Rn$ satisfying $P(x) \leq 0$. In other words, it can be thought of as a relaxation of the system $P(x) = 0$. 
The following theorem shows that this relaxation provides a solution of $P(x) = 0$ if $y$ is a colorful vector.
\begin{theorem}\label{thm:LP}
  Assume that $y$ is a colorful vector, and
  that the linear program \labelcref{LP} is feasible.
  Then, the linear program \labelcref{LP}  has
  an optimal solution, and any optimal solution $x$ satisfies $P(x) = 0$.
\end{theorem}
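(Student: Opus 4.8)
The plan is to combine linear programming duality with the colorful property of $y$. Consider the dual of $\LP(y)$: it minimizes $-\sum_{a \in \Sbf} c_a \mu_a$ over $\mu \in (\Rp)^\Sbf$ subject to $\sum_{a \in \Sbf} \mu_a\, a = y$, the equality constraint arising because $x$ is a free variable in the primal (and the sign constraints $\mu_a \geq 0$ coming from the inequality constraints of $\LP(y)$). The key observation is that this dual is feasible \emph{precisely} because $y \in \cone(\Sbf)$. Since the primal $\LP(y)$ is feasible by hypothesis and the support $\Sbf$ is finite, strong duality applies: both programs attain their optima with equal value, so $\LP(y)$ does have an optimal solution, and any primal optimal $x$ and dual optimal $\mu^\star$ satisfy complementary slackness, namely $\mu^\star_a > 0 \implies c_a + \dotp{a}{x} = 0$ for every $a \in \Sbf$.

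Now fix an arbitrary optimal solution $x$ of $\LP(y)$ and a dual optimal $\mu^\star$ paired with it. The vector $\mu^\star \in (\Rp)^\Sbf$ realizes the decomposition $y = \sum_{a \in \Sbf} \mu^\star_a\, a$, so \Cref{def:colorfulVector} yields, for each $i \in \onen$, some $a \in S_{P_i}$ with $\mu^\star_a > 0$; by complementary slackness $c_a + \dotp{a}{x} = 0$, hence $P_i(x) = \max_{a' \in S_{P_i}}(c_{a'} + \dotp{a'}{x}) \geq 0$. On the other hand $x$ is feasible for $\LP(y)$, which exactly means $c_{a'} + \dotp{a'}{x} \leq 0$ for all $a' \in \Sbf$, so $P_i(x) \leq 0$. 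Therefore $P_i(x) = 0$ for every $i \in \onen$, that is $P(x) = 0$.

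I expect no serious obstacle here: once LP duality is set up, the argument is routine, the only care needed being the correct sign conventions in the dual and the observation that its feasibility is equivalent to the hypothesis $y \in \cone(\Sbf)$. A more elementary variant avoids explicit duality by invoking the first-order optimality condition directly: at an optimum $x$, the objective vector $y$ must lie in the normal cone to the feasibility polyhedron $\{x' \in \Rn : P(x') \leq 0\}$ at $x$, i.e.\ in $\cone\{a \in \Sbf : c_a + \dotp{a}{x} = 0\}$; this produces the same nonnegative decomposition of $y$ supported on the active constraints, after which the colorful property concludes exactly as above. One may further note, using the Carath\'eodory remark preceding the theorem, that such a $\mu^\star$ can be taken supported on a basis $a_1, \dots, a_n$ with one vector per color, and these $n$ independent constraints, being tight at every primal optimum by complementary slackness, pin down $x$ uniquely — so the optimal solution is in fact unique, though this is more than the statement requires.
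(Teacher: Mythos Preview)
Your proof is correct and follows essentially the same route as the paper: LP duality, complementary slackness, and the colorful property of $y$ to force one active constraint per color. The only cosmetic difference is that the paper establishes existence of a primal optimum via a recession-cone/Minkowski--Weyl argument, whereas you obtain it from feasibility of both primal and dual (the latter coming from $y\in\cone(\Sbf)$); both are standard and the rest of the argument is identical.
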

  
\begin{proof} 
  Since the feasibility set of~\labelcref{LP},
  $\mathcal{F}\coloneqq \{x \in \Rn \colon P(x) \leq 0 \}$, is nonempty, we can consider its recession cone, which is given by $\mathcal{C}=\{x\in\mathbb{R}^n\;\colon\;\forall a \in\Sbf,\;\dotp{a}{x} \leq 0\}$. As a colorful vector, $y$ belongs to the polyhedral cone generated by the vectors $a \in \Sbf$, so $\dotp{y}{x} \leq 0$ for all $x \in \mathcal{C}$. By the Minkowski--Weyl theorem,
$\mathcal{F}$ is a Minkowski sum of the form $\mathcal{P}+\mathcal{C}$ where $\mathcal{P}$ is a polytope, i.e., every feasible point $x$ can be written as $x=x'+x''$ with $x'\in\mathcal{P}$ and $x''\in\mathcal{C}$. Since $\dotp{y}{x''}\leq 0$, the maximum of the objective function $x \mapsto \dotp{y}{x}$ over the polyhedron $\mathcal{F}$ is attained (by an element of $\mathcal{P}$).

Let $x^{\star}\in\mathbb{R}^n$ be an optimal solution of~\labelcref{LP}.
From the strong duality theorem, 
the dual linear program admits an optimal solution $(\mu^{\star}_a)_{a\in\Sbf} \in (\R_{\geq 0})^\Sbf$ which satisfies $y = \sum_{a\in\Sbf}\mu^{\star}_a\,a$ and $\mu^{\star}_a(c_a+\dotp{a}{x^{\star}})=0$ for all $a \in \Sbf$. Since $y$ is a colorful vector, for all $i\in\onen$, there is some $a_i\in S_{P_i}$ such that $\mu_{a_i}^{\star}>0$. We then get that, for all $i \in \onen$, $\mapping_i(x^{\star})\geq c_{a_i}+\dotp{a_i}{x^{\star}} = 0$. As a result, $\mapping(x^{\star})=0$.
\end{proof}

  We next provide a geometric condition ensuring that the linear 
  program \labelcref{LP} is feasible regardless of the coefficients $c_a$.
    We say that the tropical posynomial function $P$ has \emph{pointed} exponents if its support is contained in an open halfspace, \ie~there exists $z \in \Rn$ such that $\forall a\in \Sbf $, $\dotp{a}{z} < 0$.
Our interest for pointed systems comes from the following property:
\begin{proposition}
          \label{pointedToFeasible}
          The inequality problem $\mapping(x)\leq 0$ has a solution $x\in \R^n$
          regardless of the coefficients of 
          $P$ if and only if $\mapping$ has pointed exponents.
        \end{proposition}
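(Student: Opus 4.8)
The plan is to unfold the definitions and then invoke a classical theorem of the alternative. Since $P_i(x) = \max_{a \in S_{P_i}}(c_a + \dotp{a}{x})$ and $\Sbf = \biguplus_{i \in \onen} S_{P_i}$, the inequality $\mapping(x) \le 0$ is equivalent to the linear system $\dotp{a}{x} \le -c_a$ for all $a \in \Sbf$ (this is exactly the feasibility set of \labelcref{LP} recorded earlier). So the claim to prove is: this linear system is feasible for \emph{every} choice of the right-hand sides $(-c_a)_{a \in \Sbf}$ if and only if $\Sbf$ is contained in an open halfspace whose boundary passes through the origin.

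For the ``if'' direction I would use a scaling argument. Fix $z \in \Rn$ with $\dotp{a}{z} < 0$ for all $a \in \Sbf$, and consider, for arbitrary coefficients $(c_a)_{a \in \Sbf}$, the point $x \coloneqq t z$ with $t > 0$. Then $\dotp{a}{x} = t\,\dotp{a}{z}$, and since $\Sbf$ is finite and every $\dotp{a}{z}$ is strictly negative, for $t$ large enough one has $\dotp{a}{x} \le -c_a$ simultaneously for all $a \in \Sbf$, that is, $\mapping(tz) \le 0$. This holds whatever the coefficients $c_a$ are.

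For the ``only if'' direction I would argue by contraposition. Suppose $\mapping$ does not have pointed exponents, i.e.\ the strict system $\dotp{a}{z} < 0$ ($a \in \Sbf$) is infeasible. By Gordan's theorem there is then a nonzero $\lambda \in (\Rp)^{\Sbf}$ with $\sum_{a \in \Sbf} \lambda_a a = 0$. Choose the coefficients $c_a \coloneqq 1$ for every $a \in \Sbf$ (any coefficients with $\sum_{a} \lambda_a c_a > 0$ would do as well). If some $x \in \Rn$ satisfied $\mapping(x) \le 0$, then $\dotp{a}{x} \le -1$ for all $a$; weighting by $\lambda_a \ge 0$ and summing over $a \in \Sbf$ gives $\sum_a \lambda_a \dotp{a}{x} \le -\sum_a \lambda_a$, whose left-hand side equals $\dotp{\sum_a \lambda_a a}{x} = \dotp{0}{x} = 0$ while the right-hand side is $-\sum_a \lambda_a < 0$, a contradiction. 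Hence for this choice of coefficients the inequality problem $\mapping(x) \le 0$ has no solution, so it is not solvable regardless of the coefficients.

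The only ingredient beyond elementary manipulations is Gordan's theorem; alternatively one could extract the dependency $\sum_a \lambda_a a = 0$ from LP duality or the Minkowski--Weyl theorem, as already done in the proof of \Cref{thm:LP}, so I do not expect a real obstacle. The one point deserving care is to phrase the theorem of the alternative in precisely the right form: the negation of ``$\Sbf$ lies in an open halfspace through the origin'' is exactly the existence of a nontrivial nonnegative linear dependency among the exponents, and this formulation also covers degenerate situations such as $0 \in \Sbf$ (in which case $\mapping$ is automatically non-pointed and one may take $\lambda$ supported on $\{0\}$).
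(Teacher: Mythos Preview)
Your argument is correct. The ``if'' direction matches the paper's proof exactly (scale a witness $z$ of pointedness until all constraints are satisfied). For the ``only if'' direction, however, you take a detour through Gordan's theorem that the paper avoids: the paper simply observes that if $\mapping(x)\le 0$ is solvable for the particular choice $c_a\equiv 1$, then any solution $x_0$ already satisfies $\dotp{a}{x_0}\le -1<0$ for every $a\in\Sbf$, so $x_0$ \emph{itself} is the vector $z$ certifying pointedness. Your contrapositive via a theorem of the alternative is valid and yields the same conclusion, but the paper's direct argument is strictly more elementary---no external result is needed, because a feasible point for $c_a=1$ is simultaneously a witness for the open-halfspace condition.
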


        \begin{proof}
      Suppose that for all values of $(c_a)_{a\in\Sbf}$, there exists $x\in\Rn$ such that $\mapping(x)\leq 0$. By choosing $c_a\equiv1$, there exists $x_0\in\Rn$ that satisfies
      $\forall a\in \Sbf$, $1 + \dotp{a}{x_0} \leq  0$.
      Hence, 
      for all $i\in\onen$,
      the exponents of $P_i$ lie in the open halfspace $\{a\in\Rn\;|\; \langle a,x_0\rangle < 0\}$.

      Suppose now that $\mapping$ has pointed exponents. Then there is some $z\in\Rn$ such that for all $a\in \Sbf$, we have $\dotp{a}{z} < 0$. We define
      $\lambda \coloneqq \max_{a\in\Sbf}\;(-c_a)/\dotp{a}{z}$ 
      so that $\forall a\in \Sbf$, $c_a+\dotp{a}{\lambda z}\leq 0$ and therefore for all $i\in\onen$, $\mapping_i(\lambda z)\leq 0$.
    \end{proof}

As a consequence of~\Cref{thm:LP} and~\Cref{pointedToFeasible}, if the tropical posynomial system $P(x) = 0$ has pointed exponents and there exists a colorful vector, then the system admits a solution which can be found by linear programming.

  A remarkable special case consists of Markov decision processes. In this framework,
  the set $\onen$ represents the state space, and at each state $i\in\onen$,
  a player has a finite set $B_i$ of available actions included in the $n$-dimensional simplex $\{p\in \Rp^n\colon \sum_{j=1}^{n} p_j \leq 1\}$.
  If $p\in B_i$, $p_j$ stands for the probability that the next state
  is $j$, given that the current state is $i$ and action $p$
  is chosen by the player, so the difference $1-\sum_{j=1}^{n}p_j$
  is the death probability in state $i$ when this action is picked.
  To each action $p$
  is attached a reward $c_p\in \R$. Given an initial
  state $i\in\onen$, one looks for the value $v_i\in \R$,
  which is defined as the maximum over all the strategies
  of the expectation of the sum of rewards up to the death
  time, we refer the reader to \cite{puterman2014markov} for background.
  The value vector $v=(v_i)_{i\in\onen}$ is solution of the
  tropical posynomial problem
  \[
  v_i = \max_{p\in B_i}\;( c_p + \dotp{p}{v}) ,\quad \forall i\in\onen \enspace. 
  \]
  This reduces to the form~\eqref{eq:tropical} with $S_{P_i}\coloneqq B_i - e_i$,
  where $e_i$ denotes the $i$-th element of the canonical basis of $\R^n$.
  We say that a Markov decision process is of {\em discounted type}
  if for every state $i\in\onen$ there is at least one action $p\in B_j$ such that
  $\sum_{j=1}^{n}p_j<1$.
  \begin{proposition}\label{prop:markovrainbow}
    If a Markov decision process is of discounted type, then any negative vector is colorful with respect to the associated
    posynomial system. 
    \end{proposition}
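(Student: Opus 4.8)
The plan is to verify the two requirements of \Cref{def:colorfulVector} for an arbitrary vector $y\in\Rn$ with all coordinates strictly negative: first that $y\in\cone(\Sbf)$, and second that every decomposition $y=\sum_{a\in\Sbf}\mu_a a$ with $\mu\in(\Rp)^\Sbf$ activates at least one exponent of each $S_{P_i}$. The starting point is to record the sign pattern of the exponents. Writing a generic element of $S_{P_i}=B_i-e_i$ as $a=p-e_i$ with $p\in B_i\subseteq\Rp^n$, we have $a_i=p_i-1\le 0$ (since $p_i\le\sum_k p_k\le 1$), whereas $a_k=p_k\ge 0$ for every $k\ne i$.

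Granting this, the colorful implication is immediate, and it does not even use the discounted-type hypothesis. Fix $\mu\in(\Rp)^\Sbf$ with $y=\sum_{a\in\Sbf}\mu_a a$ and fix $i\in\onen$. Reading off the $i$-th coordinate and splitting the sum according to the color classes gives
\[
y_i=\sum_{a\in S_{P_i}}\mu_a\,a_i+\sum_{j\ne i}\ \sum_{a\in S_{P_j}}\mu_a\,a_i\,.
\]
By the sign pattern, every $a_i$ occurring in the second sum is $\ge 0$, so $\sum_{a\in S_{P_i}}\mu_a a_i\le y_i<0$; in particular this sum is nonzero, hence there exists $a\in S_{P_i}$ with $\mu_a>0$.

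The discounted-type hypothesis is used only to establish $y\in\cone(\Sbf)$, which is the main point. For each state $i$ it provides an action $p^{(i)}\in B_i$ with $\sum_k p^{(i)}_k<1$; set $a^{(i)}\coloneqq p^{(i)}-e_i\in S_{P_i}$ and let $Q$ be the nonnegative $n\times n$ matrix whose $i$-th column is $p^{(i)}$. All column sums of $Q$ are $<1$, so its spectral radius is $<1$; hence $I-Q$ is invertible and $(I-Q)^{-1}=\sum_{m\ge 0}Q^m$ has nonnegative entries. Then $\lambda\coloneqq(I-Q)^{-1}(-y)$ is entrywise $\ge -y>0$, and a one-line computation yields $\sum_i\lambda_i a^{(i)}=Q\lambda-\lambda=-(I-Q)\lambda=y$, exhibiting $y$ as a positive combination of one exponent from each color class; in particular $y\in\cone(\Sbf)$. (Alternatively: if some strictly negative $y$ lay outside the closed cone $\cone(\Sbf)$, a separating $z$ would satisfy $\dotp{z}{a}\le 0$ for all $a\in\Sbf$ yet $\dotp{z}{y}>0$, forcing $z$ to have a negative coordinate; evaluating $\dotp{z}{p^{(k)}-e_k}\le 0$ at an index $k$ minimizing $z_k$ then contradicts $\sum_j p^{(k)}_j<1$.) Together with the previous paragraph, this shows $y$ is colorful.

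I expect the cone-membership step to be the only genuine obstacle: one has to see that ``discounted type'' is exactly the condition ensuring that every strictly negative vector admits a nonnegative---indeed full-support---decomposition over $\Sbf$. The coordinate-sign argument for the implication is just bookkeeping once the signs of the entries of $S_{P_i}$ are written down.
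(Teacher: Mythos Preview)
Your proof is correct and follows essentially the same approach as the paper: the cone-membership step is exactly the paper's argument (with $Q$ in place of the paper's $M$ and the same Neumann-series inversion), and your coordinate-sign computation for the colorful implication is the same calculation the paper carries out, only phrased directly rather than by contradiction. The alternative separation argument you sketch is a nice bonus not in the paper, but the core route is identical.
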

\if\additionalproofs1
    
\begin{proof}
  Let $x$ be a vector with negative entries. For each $i\in\onen$, let us select
  a vector $p^{i}\in B_i$ such that $\sum_{j=1}^np^{i}_j<1$. We claim
  that $x$ is in the cone spanned by the vectors $(p^{i}-e_i)_{i\in\onen}$. Indeed,
  the system $x=\sum_{j=1}^n\lambda_i (p^{i}-e_i)$ can be rewritten
  as $\lambda = -x+ M\lambda $, where $M$ is the matrix with columns
  $(p^{i})_{i\in \onen}$, and $\lambda=(\lambda_i)_{i\in\onen}$. Since every column of $M$ of this matrix
  has a sum strictly less than $1$, the map $\lambda \mapsto M\lambda$ is a contraction
  in the $\ell_1$ norm. It follows that $(I-M)^{-1}=\sum_{k=0}^{\infty}M^k$ is a nonnegative matrix. Hence, the vector $\lambda = -(I-M)^{-1}x$ is nonnegative, 
  showing that $x$ is in the cone spanned by the $(S_{P_i})_{i\in\onen}$.
  
  By contradiction, suppose that $x$ is not a colorful vector. Then, there exists  a vector $\mu$ with nonnegative entries such that
  \[ 
  x = \sum_{j=1}^n\sum_{p \in B_j}\mu_p\big(p-e_j\big) \quad \text{and}\quad\exists i\in \onen \, , \; \forall p\in B_i \, , \; \mu_p = 0 \,,
  \]
  meaning that the $i$-th color is not used in the decomposition. It follows that 
  \begin{align*}
  x_i = x\cdot e_i & = \sum_{j=1}^n\sum_{p \in B_j}\mu_p \left(p-e_j\right)\cdot e_i = \sum_{j=1}^n\sum_{p \in B_j}\mu_p \Bigg(\sum_{k = 1}^n p_{ k}\delta_{ik}-\delta_{ij}\Bigg)\\
  & = \sum_{j=1}^n\sum_{p \in B_j}\mu_p p_j - \sum_{p\in B_i}\mu_p  = \sum_{j=1}^n\sum_{p \in B_j}\mu_p p_j 
  \end{align*}
  By nonnegativity of the multipliers $(\mu_p)_{p\in B_j}$ and of the entries of the vectors $p\in B_j$, we end up with $x_i\geq 0$, hence the contradiction.
  \end{proof}
  \fi

  Thus, we recover the linear programming approach to Markov decision processes (see~\cite{puterman2014markov}), showing that the value is obtained by minimizing the function $v\mapsto \sum_{i\in[n]}v_i$ subject to the constraints $v_i \geq c_p + \dotp{p}{v}$ for $i\in[n]$ and $p\in B_i$.
  
\section{Geometric programming approach of posynomials systems}
  \label{sec:realPosy}

We refer the reader to~\cite{chandrasekaran} for background on geometric programming.

Given a collection $P = (P_1, \dots, P_n)$ of classical posynomials, we now deal with the system $P_i(x) = 1$ for all $i \in \onen$, which, for brevity, we denote by $P(x) = 1$. We keep the notation of~\Cref{sec:solveTropPosyn} for the supports of the posynomials. Moreover, the definitions of colorful vectors and pointed exponents, which only depend on these supports, still make sense in the setting of this section.

\begin{lemma}
  \label{sublevelsets}
  If $y$ is a colorful vector, the polyhedron $\mathcal{P}$ defined by
\[ 
\mathcal{P}\coloneqq\bigl\{x\in\mathbb{R}^n \colon \forall a\in \Sbf ,\quad \log c_a+\dotp{a}{x} \leq 0\quad \text{and}\quad \dotp{y}{x} \geq \mu\bigr\} \] 
is bounded (possibly empty), regardless of our choice of positive $(c_a)_{a\in\Sbf}$ or $\mu\in\R$.\end{lemma}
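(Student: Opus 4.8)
The plan is to show that the recession cone of $\mathcal{P}$ is reduced to $\{0\}$. Since a polyhedron is bounded precisely when its recession cone is trivial (and the empty polyhedron is bounded by convention), this will establish the lemma. The crucial observation is that this recession cone depends only on the supports $\Sbf$ and on $y$, not on the coefficients $c_a$ nor on $\mu$; this is exactly what makes the conclusion uniform in those data.

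First I would write the recession cone explicitly. Each constraint $\log c_a + \dotp{a}{x} \leq 0$ contributes the homogeneous inequality $\dotp{a}{x} \leq 0$, and $\dotp{y}{x} \geq \mu$ contributes $\dotp{y}{x} \geq 0$, so that
\[
\operatorname{rec}(\mathcal{P}) = \mathcal{C} \cap \{x \in \Rn \colon \dotp{y}{x} \geq 0\}, \qquad \text{where } \mathcal{C} = \{x \in \Rn \colon \forall a \in \Sbf,\ \dotp{a}{x} \leq 0\} \, .
\]

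Next I would use the structure of colorful vectors recalled after \Cref{def:colorfulVector}: since $y$ is colorful, Carath\'eodory's theorem forces $y$ to be a positive combination of a linearly independent subfamily of $\Sbf$, while colorfulness forces that subfamily to meet every color class; hence $y = \sum_{i=1}^n \lambda_i a_i$ with $\lambda_i > 0$, $a_i \in S_{P_i}$, and $(a_1, \dots, a_n)$ a basis of $\Rn$. Now take any $x \in \operatorname{rec}(\mathcal{P})$. Then $\dotp{a_i}{x} \leq 0$ for every $i$, whereas $0 \leq \dotp{y}{x} = \sum_{i=1}^n \lambda_i \dotp{a_i}{x}$. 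As all the $\lambda_i$ are positive and all the $\dotp{a_i}{x}$ are nonpositive, this is only possible if $\dotp{a_i}{x} = 0$ for every $i$; since $(a_1, \dots, a_n)$ is a basis, we conclude $x = 0$.

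Therefore $\operatorname{rec}(\mathcal{P}) = \{0\}$, and $\mathcal{P}$ is bounded. I do not expect a genuine obstacle here: the only point requiring care is the basis decomposition of a colorful vector, but this is already spelled out in the discussion following \Cref{def:colorfulVector}; the remainder is the standard recession-cone characterization of bounded polyhedra together with the sign argument above.
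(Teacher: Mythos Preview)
Your proof is correct and follows essentially the same route as the paper: both compute the recession cone of $\mathcal{P}$, use the Carath\'eodory-based basis decomposition $y=\sum_{i}\lambda_i a_i$ of a colorful vector to force $\dotp{a_i}{x}=0$ for all $i$, and conclude $x=0$ from the basis property. The only cosmetic difference is that the paper explicitly prefaces with ``If $\mathcal{P}$ is nonempty'' before discussing the recession cone, whereas you dispose of the empty case up front; both are fine.
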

\begin{proof}
If $\mathcal{P}$ is nonempty, let $\mathcal{C}\coloneqq\{x \in \R^n \;|\; \forall a \in \Sbf, \; \dotp{a}{x}\leq 0 \, , \; \dotp{y}{x} \geq 0 \}$ denote its recession cone, and let $x \in \mathcal{C}$. Since $y$ is a colorful vector, there exists $(\lambda_1,\dots,\lambda_n)\in \R_{> 0}^n$ and a basis $(a_1,\dots,a_n)\in\prod_{i\in\onen} S_{P_i}$ such that $y=\sum_{i=1}^n \lambda_i a_i$. Thus, $\dotp{y}{x} \leq 0$, and so $\dotp{y}{x} = \sum_{i = 1}^n \lambda_i \dotp{a_i}{x} = 0$. As a consequence, since $\lambda_i > 0$ for all $i\in\onen$, $\dotp{a_i}{x} = 0$. Since  $(a_1,\dots,a_n)$ is a basis, we get $x = 0$. Thus, $\mathcal{C} = \{0\}$, and $\mathcal{P}$ is bounded by Minkowski--Weyl Theorem.
\end{proof}
  
Given $X \in \R^n$, we denote by $\exp X$ the vector with entries $\exp X_i$, $i \in \onen$.
\begin{theorem}
    \label{thm:entropic}
Let $\mapping(x) = 1$ be a posynomial system with pointed exponents, and $y$ be a colorful vector. Then, the system has a solution $x = \exp X^* \in (\R_{> 0})^n$,  where $X^*$ is an arbitrary solution of the following geometric program:
\begin{equation}
\textrm{Maximize} \quad {\dotp{y}{X}}\qquad \text{subject to}\qquad \forall i\in\onen \quad g_i(X)\leq 0\, ,
\label{pb-G}
\tag{G}
\end{equation}
where $g_i(X) \coloneqq \log\Big(\sum_{a\in S_{P_i}}c_{a}\,e^{\dotp{a}{X}}\Big)$.
\end{theorem}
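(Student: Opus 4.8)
The plan is to follow the same architecture as the proof of \Cref{thm:LP}, but with linear programming duality replaced by the Karush--Kuhn--Tucker conditions for the convex program \labelcref{pb-G} (note that \labelcref{pb-G} maximizes a linear, hence concave, objective over the convex set $\{X\colon g_i(X)\le 0,\ i\in\onen\}$, since each $g_i$ is a log-sum-exp of affine forms). First I would check that \labelcref{pb-G} is \emph{strictly} feasible: as $\mapping$ has pointed exponents, there is $z\in\Rn$ with $\dotp{a}{z}<0$ for all $a\in\Sbf$, so $g_i(\lambda z)\to-\infty$ as $\lambda\to+\infty$ and, for $\lambda$ large enough, the point $X_0\coloneqq\lambda z$ satisfies $g_i(X_0)<0$ for every $i\in\onen$; in particular Slater's condition holds. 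Next I would show the maximum is attained. The key observation is that, all terms $c_a e^{\dotp{a}{X}}$ being positive, the constraint $g_i(X)\le 0$ (that is $\sum_{a\in S_{P_i}}c_a e^{\dotp{a}{X}}\le 1$) forces $\log c_a+\dotp{a}{X}\le 0$ for \emph{each} $a\in S_{P_i}$. Hence, setting $\mu\coloneqq\dotp{y}{X_0}$, the set of feasible $X$ with $\dotp{y}{X}\ge\mu$ is contained in the polyhedron $\mathcal{P}$ of \Cref{sublevelsets}, so it is bounded; being also closed and nonempty (it contains $X_0$), it is compact, and the linear objective attains its maximum on it, which is its maximum over the whole feasible set. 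Let $X^\star$ be any optimal solution.

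Then I would apply the KKT conditions at $X^\star$. By Slater's condition, there exist multipliers $\nu_1,\dots,\nu_n\ge 0$ such that
\[ y=\sum_{i\in\onen}\nu_i\,\nabla g_i(X^\star)\quad\text{and}\quad \nu_i\,g_i(X^\star)=0\ \text{ for all } i\in\onen. \]
A direct differentiation gives $\nabla g_i(X)=\sum_{a\in S_{P_i}}\theta^{(i)}_a(X)\,a$, where $\theta^{(i)}_a(X)\coloneqq c_a e^{\dotp{a}{X}}\big/\sum_{a'\in S_{P_i}}c_{a'}e^{\dotp{a'}{X}}$, so the $\theta^{(i)}_a(X)$ are nonnegative, sum to $1$, and are in fact all \emph{strictly} positive. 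Substituting, $y=\sum_{a\in\Sbf}\mu_a\,a$ with $\mu_a\coloneqq\nu_i\,\theta^{(i)}_a(X^\star)\ge 0$ for $a\in S_{P_i}$ (recall $\Sbf$ is a disjoint union, so each $a$ lies in a unique color class). Since $y$ is colorful, for each $i\in\onen$ there is $a\in S_{P_i}$ with $\mu_a>0$; as $\theta^{(i)}_a(X^\star)>0$, this forces $\nu_i>0$. Complementary slackness then yields $g_i(X^\star)=0$, i.e.\ $\mapping_i(\exp X^\star)=\sum_{a\in S_{P_i}}c_a e^{\dotp{a}{X^\star}}=1$, for every $i\in\onen$. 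Thus $x\coloneqq\exp X^\star$ solves $\mapping(x)=1$. Since the argument uses only the existence of KKT multipliers at $X^\star$, it applies to an arbitrary optimal solution, as claimed.

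I expect the main obstacle to be the compactness step needed for the existence of an optimal solution: it is exactly where one must notice that the feasible set of \labelcref{pb-G} is squeezed against the polyhedron of \Cref{sublevelsets} via the elementary ``positive-terms'' inequality, which is what allows the boundedness statement of that lemma to transfer to the nonlinear feasible region. Once this is in place, the KKT part is a routine unwinding of the gradient of a log-sum-exp function combined with colorfulness, playing the role that strong LP duality played in \Cref{thm:LP}. (One could alternatively invoke standard geometric-programming duality instead of KKT, but the KKT route keeps the analogy with \Cref{thm:LP} most transparent.)
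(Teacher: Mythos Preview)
Your proposal is correct and follows essentially the same route as the paper: strict feasibility from pointedness (Slater), boundedness of superlevel sets via the elementary inequality $\log c_a+\dotp{a}{X}\le g_i(X)$ together with \Cref{sublevelsets}, and then KKT stationarity plus colorfulness to force all multipliers positive and conclude by complementary slackness. The paper phrases the sandwich as $h_i\le g_i\le h_i+\log|S_{P_i}|$ and invokes \Cref{pointedToFeasible} for feasibility, but the argument is the same.
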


\begin{proof}
  For $x\in \mathbb{R}^n_{>0}$, we define $X=\log(x)$ (component-wise) so that $\mapping(x)=1$ is equivalent to solving $g_i(X) = 0$ for all $i \in \onen$. By H\"older's inequality, the functions $(g_i)_{i\in\onen}$ are convex.   
  We define $h_i \colon X\mapsto \max_{a\in S_{P_i}}\big(\log(c_{a}) + \dotp{a}{X}\big)$ for $i\in\onen$ and we observe that $h_i(X)\leq g_i(X) \leq h_i(X)+\log(|S_{P_i}|)$.
  
Since the system $\mapping(x) = 1$ has pointed exponents, by~\Cref{pointedToFeasible}, the polyhedron $\{X \in \R^n \colon \forall i\in\onen \, , \; h_i(X)+\log(|S_{P_i}|)\leq 0\}$ is nonempty.
  A fortiori, the feasible set of~\eqref{pb-G} is nonempty.
  
  Let us now prove that the maximum of~\eqref{pb-G} is finite and attained, by proving that the $\mu$-superlevel set $\mathcal{S}_{\mu}=\{X \in \R^n \colon \dotp{y}{X}\geq \mu\; \text{and}\; \forall i\in\onen \, , \; g_i(X)\leq 0\}$ of the objective function (included in the feasible set) is compact for all $\mu\in\mathbb{R}$. Closedness is direct, and we observe that for $\mu\in\mathbb{R}$, $\mathcal{S}_{\mu}\subset \{X \in \R^n \colon \dotp{y}{X}\geq \mu\; \text{and}\; \forall i\in\onen \, , \;  h_i(X)\leq 0\}$, but by~\Cref{sublevelsets}, this polyhedron is bounded. Hence, \eqref{pb-G} admits an optimal solution $X^{\star}$.
  
  Furthermore, again by~\Cref{pointedToFeasible}, there exists $\overline{X}$ such that for all $i\in\onen$, $h_i(\overline{X})+\log(|S_{P_i}|)+1\leq 0$. Therefore, for all $i\in\onen$, $g_i(\overline{X})<0$, which means that \eqref{pb-G} satisfies Slater's condition. Problem~\eqref{pb-G} being convex, optimality of $X^{\star}$ is characterized by the Karush--Kuhn--Tucker conditions (see~\cite{boyd2004convex}). Hence, there is a vector of nonnegative multipliers $\lambda^{\star}=(\lambda^{\star}_1,\dots,\lambda^{\star}_n)$ such that $(X^{\star},\lambda^{\star})$ is a stationary point of the Lagrangian of~\eqref{pb-G}, and the complementarity slackness conditions hold, \ie~for all $i\in\onen$, $\lambda_i ^{\star}\,g_i(X^{\star})=0$.
  Defining $Z_i \coloneqq \sum_{a\in S_{P_i}}c_{a} e^{\dotp{a}{X^{\star}}}>0$
   for $i\in\onen$, the stationarity conditions give 
\[
y = \sum_{i=1}^n\frac{\lambda_i^{\star}}{Z_i}\sum_{a\in S_{P_i}}c_{a}\, e^{\dotp{a}{X^{\star}}}\,a \, . 
\]
Since $y$ is colorful, for all $i\in\onen$, $\lambda_i^{\star}>0$. The complementarity slackness conditions yield $g_i(X^{\star})=0$ for all $i\in\onen$.
  So $x^{\star}\coloneqq\exp(X^{\star})$ satisfies $\mapping(x^{\star})=1$.
    \end{proof}
  
  \section{Properties of the colorful interior of convex sets}
  \label{sec:colorful}

\Cref{thm:LP,thm:entropic} rely on the existence of a colorful vector. The purpose of this section is to study the properties of the set of such vectors. In fact, colorful vectors can be defined more generally from a family of $n$ closed convex cones.
\begin{definition}\label{def:colorfulInterior}
Let $\Ccal = (C_1,\dots,C_n)$ be a collection of $n$ closed convex cones of $\Rn$. A vector $y \in \Rn$ is said to be \emph{colorful} if it belongs to the set

\[
\cone (C_1 \cup \dots \cup C_n) 
\setminus \mediumcup{i\in\onen}\mathrm{cone} \big(\mediumcup{j\neq i} C_{j}\big) \, .
\]
The latter set is referred to as the \emph{colorful interior} of $\Ccal$.
\end{definition}
Remark that~\Cref{def:colorfulVector} can be recovered by taking $C_i \coloneqq \cone(S_{P_i})$ for all $i \in \onen$. 
In what follows, we restrict to the case where the collection $\Ccal$ 
is \emph{pointed}, \ie~$\cone(C_1 \cup \dots \cup C_n)$ is a pointed cone (in the non pointed case, the colorful interior enjoys much less structure than the one proved in~\Cref{th:colint}, in particular it may not even be connected). Suppose that $\{x\in\Rn \colon \dotp{z}{x}>0\}$ is an open halfspace containing the $(C_i)_{i\in\onen}$. Then, as a cone, the colorful interior of $\Ccal$ can be more simply studied from its cross-section with $\{x \in \Rn \colon \dotp{x}{z}=1\big\}$. The latter can be shown to coincide with the set 
\begin{equation}\label{eq:cross_section}
\conv (S_1 \cup \dots \cup S_n) 
\setminus \mediumcup{i\in\onen}\mathrm{conv} \big(\mediumcup{j\neq i} S_j\big)
\end{equation}
where for $i\in\onen$, $S_i$ is the cross-section of the cone $C_i$ by $\{x \in \Rn \colon \dotp{x}{z}=1\big\}$. Given a collection $\Scal = (S_1, \dots, S_n)$ of closed convex sets of $\R^{n-1}$, we refer to the set~\eqref{eq:cross_section} as the \emph{colorful interior} of $\Scal$, and denote it by $\arcenciel{\Scal}$. We start with a lemma justifying the terminology we have chosen:
\begin{lemma}\label{lemma:open}
Let $\Scal = (S_1, \dots, S_n)$ be a collection of $n$ closed convex sets of $\R^{n-1}$. Then $\arcenciel{\Scal}$ is an open set included in $\interior \conv(S_1\cup\dots\cup S_n)$.
\end{lemma}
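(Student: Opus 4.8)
The plan is to establish the two assertions separately: first, that $\arcenciel{\Scal} \subseteq \interior \conv(S_1 \cup \dots \cup S_n)$; and second, that $\arcenciel{\Scal}$ is open. Write $S \coloneqq \conv(S_1 \cup \dots \cup S_n)$ and, for each $i \in \onen$, $T_i \coloneqq \conv\bigl(\bigcup_{j \neq i} S_j\bigr)$, so that $\arcenciel{\Scal} = S \setminus \bigcup_{i} T_i$. Each $T_i$ is convex, and $S = \conv(S_i \cup T_i)$ for every $i$.

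For the inclusion, I would argue by contraposition: if $y \in S$ but $y \notin \interior S$, then $y$ lies on the relative boundary of $S$ (note $S$ is full-dimensional is not assumed, so one should work within $\mathrm{aff}(S)$; but since we only claim inclusion in the interior, if $S$ has empty interior the inclusion is about showing $\arcenciel{\Scal}$ is empty, which follows from the same boundary argument applied in the affine hull). Pick a supporting hyperplane $H$ of $S$ at $y$, with $S$ on one side. Since $y \in S = \conv(\bigcup_i S_i)$, by Carath\'eodory $y$ is a convex combination of finitely many points drawn from the $S_i$'s; all of these points lie in $S$, hence on the closed halfspace bounded by $H$, and since their convex combination $y$ lies on $H$ itself, every point appearing with positive weight must lie in $H \cap S$. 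Now observe $H \cap S = \conv\bigl(\bigcup_i (H \cap S_i)\bigr)$ and, crucially, at least one of the sets $H \cap S_i$ is empty — because if $H$ met all $n$ of the $S_i$, then... hmm, that is not immediately a contradiction. Let me instead argue: since $y$ is colorful, the decomposition of $y$ must use points from all $n$ colors (this is the content of Definition \ref{def:colorfulVector} transported through the cross-section correspondence — a point fails to be colorful exactly when it can be written using only $n-1$ colors, i.e. lies in some $T_i$). So the supporting-hyperplane argument forces $H \cap S_i \neq \emptyset$ for all $i$, meaning each $S_i$ has a point on the boundary hyperplane $H$; but then $y$, being a convex combination of such boundary points lying in the at-most-$(n-2)$-dimensional set $H \cap \mathrm{aff}(S)$ when $H$ is a genuine supporting hyperplane... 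Actually the cleanest route: a boundary point of $S$ that lies in $\conv(\bigcup_i S_i)$ and uses all colors — I would show directly it must then lie in some $T_i$ by a dimension count or by re-deriving the colorful condition. The key reduction is that if $y \notin \interior S$, the supporting hyperplane "traps" the decomposition into a lower-dimensional face, and one shows a colorful point cannot live there. This is the step I expect to be the main obstacle, and I would handle it by carefully using the cross-section dictionary between Definitions \ref{def:colorfulVector}/\ref{def:colorfulInterior} and \eqref{eq:cross_section}: a point of the cross-section is colorful iff every representation as a convex combination of the $S_i$ uses all $n$ colors, and on a supporting face this fails.

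For openness, I would show the complement $\bigl(\bigcup_i T_i\bigr) \cup (\R^{n-1} \setminus S)$ is closed relative to a neighborhood of any point of $\arcenciel{\Scal}$. Equivalently, fix $y \in \arcenciel{\Scal}$; by the first part $y \in \interior S$, so a whole ball $B(y,\epsilon) \subseteq S$. It remains to shrink $\epsilon$ so that $B(y,\epsilon) \cap T_i = \emptyset$ for each $i$. Since the $S_i$ are closed but possibly unbounded, $T_i = \conv(\bigcup_{j\neq i} S_j)$ need not be closed; however, $y \notin T_i$ and $T_i$ is convex, so there is a hyperplane separating $y$ from $T_i$ — and because $y \in \interior S$ while $T_i \subseteq S$, the separation can be taken strict on $y$'s side, giving an open halfspace containing $y$ and disjoint from $T_i$. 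Intersecting these $n$ halfspaces with $B(y,\epsilon)$ yields an open neighborhood of $y$ contained in $\arcenciel{\Scal}$. The only subtlety is justifying the strict separation of the point $y$ from the (non-closed) convex set $T_i$; this is standard since $y \notin \overline{T_i}$ would be too strong, but in fact $y \in \interior S \setminus T_i$ with $T_i$ convex and $y$ an interior point of a superset of $T_i$ suffices to place $y$ strictly on one side of a separating hyperplane. Assembling the two parts completes the proof.
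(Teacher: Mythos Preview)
\textbf{On the inclusion $\arcenciel{\Scal}\subset\interior S$.} You are circling the right idea but never land the one-line argument that the paper uses. Once you have a supporting hyperplane $H$ of $S$ at a boundary point $y$, observe that $y\in S\cap H=\conv\bigl(\bigcup_{i}(S_i\cap H)\bigr)$, a convex set contained in the $(n-2)$-dimensional affine space $H$. Carath\'eodory's theorem \emph{applied inside $H$} writes $y$ as a convex combination of at most $n-1$ points of $\bigcup_i(S_i\cap H)$; hence at most $n-1$ colours appear, so $y\in T_i$ for the missing colour $i$, and $y\notin\arcenciel{\Scal}$. Your detours through ``$H\cap S_i=\emptyset$ for some $i$'' or ``every decomposition uses all colours, contradiction'' are unnecessary and, as you noticed yourself, do not close the argument. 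The point is purely the dimension drop plus Carath\'eodory.

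\textbf{On openness.} Here there is a genuine gap, and your proposed patch does not work. You correctly flag that $T_i=\conv\bigl(\bigcup_{j\neq i}S_j\bigr)$ need not be closed when the $S_j$ are merely closed (not compact). But your fallback --- separate $y$ from the convex set $T_i$ by a hyperplane, strictly on $y$'s side because $y\in\interior S$ --- is not valid: if $y\in\overline{T_i}\setminus T_i$ then no hyperplane separates $y$ strictly from $T_i$, and being interior to a superset of $T_i$ does not help. Concretely, take $n=3$ in $\R^2$ with $S_1=\{(0,-1)\}$, $S_2=\{1\}\times[0,\infty)$, $S_3=\{-1\}\times[0,\infty)$. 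Then $(0,-\tfrac12)\in\arcenciel{\Scal}$, yet $(-\epsilon,-\tfrac12)\in T_2$ for every small $\epsilon>0$, so $\arcenciel{\Scal}$ is not open. The paper's own proof simply asserts that $T_i$ is closed and takes a minimum of distances; this is fine when the $S_j$ are compact (as in all the paper's applications, where they are polytopes coming from finite supports), but the lemma as stated for arbitrary closed convex sets needs that hypothesis. Your instinct that something is missing is right; the resolution is to assume compactness, not to try a separation argument.
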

\if\additionalproofs1

\begin{proof}
  Since $\arcenciel{\Scal}\subset\conv(\Scal)$,
  it suffices to prove that $\arcenciel{\Scal}\cap\partial\conv(\Scal)=\emptyset$
  to show $\arcenciel{\Scal}\subset\interior\conv(\Scal)$.
  Let $x\in\partial \conv(\Scal)$, and let $H$ be a supporting hyperplane of $\conv(\Scal)$ at $x$, so that $x\in\conv(\Scal) \cap H = \conv(\bigcup_{i\in\onen}(S_i\cap H))$. By Carath\'{e}odory's theorem, $x$ can be written using at most $n-1$ vectors of $(S_i\cap H)_{i\in\onen}$, so it is in $\conv(\bigcup_{j\neq i}S_i)$ for some $i\in\onen$ (the color not used in the decomposition). Therefore $x\not\in\arcenciel{\Scal}$.

  Now we show $\arcenciel{\Scal}$ is open. Let $i\in\onen$, $x$ is not in $\conv(\bigcup_{j\neq i}S_i)$ which is closed, so $d_i := \mathrm{d}(x,\conv(\bigcup_{j\neq i}S_i)) > 0$. Similarly, $\partial\conv(\Scal)$ is closed and does not contain $x$ by the above argument, so $d':=\mathrm{d}(x,\conv(\Scal))>0$. The open ball centered in $x$ with radius $\min((d_i)_{i\in\onen},d')$ is clearly included in $\conv(\Scal)$.
\end{proof}
\fi
The set $\arcenciel{\Scal}$ has appeared in a work of Lawrence and Soltan~\cite{La09}, in the proof of the characterization of the intersection of convex transversals to a collection of sets. In more details, \Cref{lemma:open} and~\cite[Lemma~6]{La09} imply:
\begin{proposition}\label{prop:transversal}
Let $\Scal = (S_1, \dots, S_n)$ be a collection of $n$ closed convex sets of $\R^{n-1}$. 
Define $\mathcal{D}\coloneqq\{\conv(\{x_1,\dots,x_n\}) \colon x_1\in S_1,\dots,x_n\in S_n\}$, the set of \emph{colorful simplices}, \ie~with one vertex in each colored set.
Then we have

\[ 
\arcenciel{\Scal} = \bigcap_{\Delta \in \mathcal{D}} \interior\Delta= \interior\bigcap_{\Delta\in\mathcal{D}} \Delta \enspace .
\]

\end{proposition}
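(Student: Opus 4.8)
The plan is to establish the two equalities separately. The first, $\arcenciel{\Scal}=\bigcap_{\Delta\in\mathcal{D}}\interior\Delta$, is essentially \cite[Lemma~6]{La09}; the second, $\bigcap_{\Delta\in\mathcal{D}}\interior\Delta=\interior\bigcap_{\Delta\in\mathcal{D}}\Delta$, follows from a soft topological argument once the first is in hand.

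The inclusion $\bigcap_{\Delta\in\mathcal{D}}\interior\Delta\subseteq\arcenciel{\Scal}$ is elementary, and I would prove it by contraposition. Assume $x\notin\arcenciel{\Scal}$. If $x\notin\conv(S_1\cup\dots\cup S_n)$, then $x$ lies in no colorful simplex, and a fortiori $x\notin\bigcap_{\Delta\in\mathcal{D}}\interior\Delta$. Otherwise there is an index $i$ with $x\in\conv(\bigcup_{j\neq i}S_j)$; since the $S_j$ are convex, the convex hull of their union is exactly the set of combinations $\sum_{j\neq i}\lambda_j z_j$ with $z_j\in S_j$, $\lambda_j\geq 0$ and $\sum_{j\neq i}\lambda_j=1$, so $x$ admits such a representation. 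Choosing any $z_i\in S_i$, the simplex $\Delta\coloneqq\conv\{z_1,\dots,z_n\}$ lies in $\mathcal{D}$ and $x\in\conv\{z_j:j\neq i\}$; the latter face is contained in $\partial\Delta$ when $\Delta$ is full-dimensional, and $\interior\Delta=\emptyset$ otherwise, so in all cases $x\notin\interior\Delta$. The reverse inclusion $\arcenciel{\Scal}\subseteq\bigcap_{\Delta\in\mathcal{D}}\interior\Delta$, which says that the vertices of a colorful simplex cannot be moved within their color classes so as to leave a colorful vector outside the interior of the simplex, is the substantive part; this is \cite[Lemma~6]{La09}, and together with \Cref{lemma:open} it gives the first equality.

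For the second equality, the inclusion $\interior\bigcap_{\Delta\in\mathcal{D}}\Delta\subseteq\bigcap_{\Delta\in\mathcal{D}}\interior\Delta$ is immediate from the monotonicity of the interior operator applied to $\bigcap_{\Delta\in\mathcal{D}}\Delta\subseteq\Delta$. For the converse, by the first equality and \Cref{lemma:open} the set $\bigcap_{\Delta\in\mathcal{D}}\interior\Delta=\arcenciel{\Scal}$ is open; as $\interior\Delta\subseteq\Delta$ for each $\Delta$, it is an open subset of $\bigcap_{\Delta\in\mathcal{D}}\Delta$, hence contained in the interior of $\bigcap_{\Delta\in\mathcal{D}}\Delta$. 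This yields the second equality.

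The one real obstacle is the hard inclusion of the first equality, which I would import directly from \cite{La09} rather than reprove; the subtlety to keep in mind there is the treatment of degenerate colorful simplices, whose presence forces $\arcenciel{\Scal}$ to be empty. The remaining steps---the easy inclusion above and the exchange of $\interior$ and $\bigcap$---are routine.
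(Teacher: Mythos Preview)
Your proposal is correct and follows essentially the same approach as the paper, which simply states that the proposition is implied by \Cref{lemma:open} and \cite[Lemma~6]{La09}; you have fleshed out exactly how these two ingredients combine, supplying the easy inclusion of the first equality and the straightforward topological argument for the second. The only minor quibble is that \Cref{lemma:open} is not actually needed for the first equality (your two inclusions already suffice); it is only used, as you correctly do, to deduce the second equality from the first.
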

Remark that~\Cref{prop:transversal} still holds if the colorful simplices $\Delta\in\mathcal{D}$ are replaced by the convex transversals to the sets $S_1, \dots, S_n$.

Given a hyperplane $H \coloneqq \{x\in\R^{n-1}\colon \dotp{h}{x} = b \}$, we shall denote below by $H^{>}$ (resp.\, $H^{\leq}$) the open (resp.\, closed) halfspace $\{x\in\R^{n-1}\colon \dotp{h}{x} > b\}$ (resp.\,  $\{x\in\R^{n-1}\colon \dotp{h}{x} \leq b\}$).
As a corollary of~\cite[Th.~2]{La09}, we get the following characterization of the colorful interior:

\begin{theorem}\label{th:colint}
Let $\Scal = (S_1,\dots,S_n)$ be a collection of $n$ closed convex sets of $\R^{n-1}$, and assume that $\arcenciel{\Scal}$ is nonempty. Then, $\arcenciel{\Scal}$ is the interior of a $(n-1)$-dimensional simplex. 

Moreover, if the sets $(S_i)_{i\in\onen}$ are bounded, then there are $n$ unique hyperplanes $(H_i)_{i\in\onen}$ such that for all $i\in\onen$, $S_i\subset H_i^>$, and for all $j\neq i$, $S_j\subset H_i^{\leq}$ and $S_j \cap H_i\neq \emptyset$. In this case, we have $\arcenciel{\Scal} = \bigcap_{i\in\onen} H_i^>$.

\end{theorem}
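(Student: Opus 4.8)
The plan is to reduce the statement to the main theorem of Lawrence and Soltan by way of \Cref{prop:transversal}, and then to add the two features specific to our situation: that $\arcenciel{\Scal}$ is open (\Cref{lemma:open}) and that it is nonempty.

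First I would record that, by \Cref{prop:transversal} and the remark following it, $\arcenciel{\Scal} = \interior M$, where $M \coloneqq \bigcap_{\Delta \in \mathcal{D}} \Delta$ is the intersection of all convex transversals of $S_1,\dots,S_n$. Since $\arcenciel{\Scal}$ is assumed nonempty, every $S_i$ is nonempty (if some $S_i$ were empty, then $\conv(S_1\cup\dots\cup S_n) = \conv(\bigcup_{j\neq i}S_j)$ and $\arcenciel{\Scal}$ would be empty); fixing one colorful simplex $\Delta_0 \in \mathcal{D}$ then gives $M \subseteq \Delta_0$, so $M$ is bounded. By \cite[Th.~2]{La09}, $M$ is a convex polytope of dimension at most $n-1$ with at most $n$ facets. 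Since $\interior M = \arcenciel{\Scal} \neq \emptyset$, the polytope $M$ is full-dimensional in $\R^{n-1}$; a bounded, full-dimensional polytope of $\R^{n-1}$ with at most $n$ facets has exactly $n$ facets and is an $(n-1)$-dimensional simplex. This proves the first assertion and realizes $\arcenciel{\Scal} = \interior M$ as the interior of that simplex.

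For the second assertion, assume the $S_i$ are bounded, hence compact, and let $H_1,\dots,H_n$ be the facet hyperplanes of the simplex $M$, each written $H_i = \{x \colon \dotp{h_i}{x} = b_i\}$ and oriented so that $M$ lies in the closed halfspace $\{x \colon \dotp{h_i}{x} \geq b_i\}$; then $\interior M = \bigcap_{i\in\onen} H_i^>$, so $\arcenciel{\Scal} = \bigcap_{i\in\onen} H_i^>$, which is the displayed equality once the $H_i$ are identified. It remains to match the facets with the colors and to verify the separation properties, and this is exactly the content of \cite[Th.~2]{La09}: after relabelling, $S_i \subseteq H_i^>$ for each $i$, while for $j\neq i$ one has $S_j \subseteq H_i^{\leq}$ and $S_j \cap H_i \neq \emptyset$ (compactness of the $S_j$ is what forces them to actually meet $H_i$ and not merely lie on its closed side), and the $H_i$ are the unique hyperplanes with these properties. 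The one point I would spell out directly is the inclusion $S_i \subseteq H_i^>$: if some $a \in S_i$ satisfied $\dotp{h_i}{a} \leq b_i$, then $\conv(\{a\} \cup \bigcup_{j\neq i} S_j)$ would be a convex transversal contained in $H_i^{\leq}$, hence would contain $M$, giving $M \subseteq H_i^{\leq}$; combined with $M \subseteq \{x\colon \dotp{h_i}{x} \geq b_i\}$ this yields $M \subseteq H_i$, contradicting $\dim M = n-1$.

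The reduction and the polytope-counting argument above are routine; the main obstacle is the simplex structure of $M$ together with the facet-to-color correspondence and the uniqueness of the $H_i$, which I do not reprove but quote from \cite[Th.~2]{La09}, the bridge being \Cref{prop:transversal} (to pass from colorful simplices to convex transversals) and \Cref{lemma:open} (to pass from $M$ to its interior).
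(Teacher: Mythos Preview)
Your proposal is correct and follows essentially the same route as the paper: the paper does not give a standalone proof but states the result as a corollary of \cite[Th.~2]{La09}, with the tangent hyperplanes attributed to \cite{Ca94,Le96}. You have simply spelled out the reduction, passing from $\arcenciel{\Scal}$ to the intersection $M$ of convex transversals via \Cref{prop:transversal} and \Cref{lemma:open}, then invoking \cite[Th.~2]{La09} and the facet-counting argument to force $M$ to be a full-dimensional simplex; the second part likewise leans on \cite[Th.~2]{La09} for the facet--color correspondence and uniqueness, exactly as the paper indicates.
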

Geometrically, every $H_i$ in~\Cref{th:colint} is a tangent hyperplane to the convex sets $(S_j)_{j \neq i}$ which separates them from the set $S_i$. The existence (and uniqueness) of such tangent hyperplanes follows from the work of Cappell et al.~\cite{Ca94}, see also the work of Lewis, Klee and von Hohenbalken~\cite{Le96} for a constructive proof.
We depict on~\Cref{fig:colorful1} three colored sets $S_1, S_2$ and $S_3$ in $\R^2$ with nonempty colorful interior $\arcenciel{(S_1,S_2,S_3)}$, illustrating that the latter is the interior of a simplex as claimed in~\Cref{th:colint}. 

\definecolor{colorC}{HTML}{FF7F0E}
\definecolor{colorB}{HTML}{1F77B4}
\definecolor{colorA}{HTML}{2CA02C}

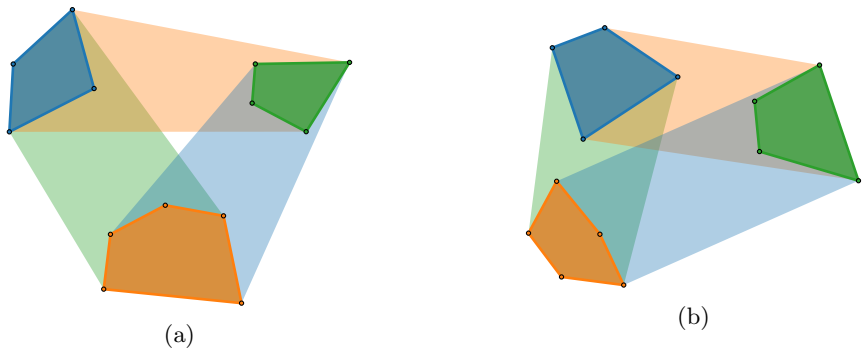
\begin{figure}[t]
  \centering
  \begin{subfigure}{0.44\textwidth}
  \def\tkzscl{.32}\centering
\definecolor{ududff}{rgb}{0.30196078431372547,0.30196078431372547,1}
\begin{tikzpicture}[line cap=round,line join=round,>=triangle 45,x=1cm,y=1cm,scale=\tkzscl]
\fill[line width=1pt,color=colorA,fill=colorA,fill opacity=0.36] (0.54,0.74) -- (3,3) -- (9.278,-5.572) -- (10.026,-9.202) -- (4.3,-8.62) -- (0.38,-2.08) -- cycle;
\fill[line width=1pt,color=colorC,fill=colorC,fill opacity=0.36] (0.38,-2.08) -- (12.71,-2.074) -- (14.514,0.808) -- (3,3) -- (0.54,0.74) -- cycle;
\fill[line width=1pt,color=colorB,fill=colorB,fill opacity=0.36] (4.3,-8.62) -- (4.58,-6.34) -- (10.598,0.742) -- (14.514,0.808) -- (10.026,-9.202) -- cycle;
\fill[line width=1pt,color=colorB,fill=colorB,fill opacity=0.7] (0.54,0.74) -- (0.38,-2.08) -- (3.9,-0.28) -- (3,3) -- cycle;
\fill[line width=1pt,color=colorA,fill=colorA,fill opacity=0.7] (10.598,0.742) -- (10.466,-0.886) -- (12.71,-2.074) -- (14.514,0.808) -- cycle;
\fill[line width=1pt,color=colorC,fill=colorC,fill opacity=0.7] (4.3,-8.62) -- (4.58,-6.34) -- (6.858,-5.132) -- (9.278,-5.572) -- (10.026,-9.202) -- cycle;
\draw [line width=1pt,color=colorB] (0.54,0.74)-- (0.38,-2.08);
\draw [line width=1pt,color=colorB] (0.38,-2.08)-- (3.9,-0.28);
\draw [line width=1pt,color=colorB] (3.9,-0.28)-- (3,3);
\draw [line width=1pt,color=colorB] (3,3)-- (0.54,0.74);
\draw [line width=1pt,color=colorA] (10.598,0.742)-- (10.466,-0.886);
\draw [line width=1pt,color=colorA] (10.466,-0.886)-- (12.71,-2.074);
\draw [line width=1pt,color=colorA] (12.71,-2.074)-- (14.514,0.808);
\draw [line width=1pt,color=colorA] (14.514,0.808)-- (10.598,0.742);
\draw [line width=1pt,color=colorC] (4.3,-8.62)-- (4.58,-6.34);
\draw [line width=1pt,color=colorC] (4.58,-6.34)-- (6.858,-5.132);
\draw [line width=1pt,color=colorC] (6.858,-5.132)-- (9.278,-5.572);
\draw [line width=1pt,color=colorC] (9.278,-5.572)-- (10.026,-9.202);
\draw [line width=1pt,color=colorC] (10.026,-9.202)-- (4.3,-8.62);
\begin{scriptsize}
\draw [fill=colorB] (0.54,0.74) circle (2.5pt);
\draw [fill=colorB] (0.38,-2.08) circle (2.5pt);
\draw [fill=colorB] (3.9,-0.28) circle (2.5pt);
\draw [fill=colorB] (3,3) circle (2.5pt);
\draw [fill=colorA] (10.598,0.742) circle (2.5pt);
\draw [fill=colorA] (10.466,-0.886) circle (2.5pt);
\draw [fill=colorA] (12.71,-2.074) circle (2.5pt);
\draw [fill=colorA] (14.514,0.808) circle (2.5pt);
\draw [fill=colorC] (4.3,-8.62) circle (2.5pt);
\draw [fill=colorC] (4.58,-6.34) circle (2.5pt);
\draw [fill=colorC] (6.858,-5.132) circle (2.5pt);
\draw [fill=colorC] (9.278,-5.572) circle (2.5pt);
\draw [fill=colorC] (10.026,-9.202) circle (2.5pt);
\end{scriptsize}
\end{tikzpicture}

  \caption{ }
  \label{fig:colorful1}
  \end{subfigure}
  \hspace*{\fill}
  \begin{subfigure}{0.44\textwidth}
  \def\tkzscl{.32}\centering
\definecolor{ududff}{rgb}{0.30196078431372547,0.30196078431372547,1}
\definecolor{cqcqcq}{rgb}{0.7529411764705882,0.7529411764705882,0.7529411764705882}
\begin{tikzpicture}[line cap=round,line join=round,>=triangle 45,x=1cm,y=1cm,scale=\tkzscl]
\fill[line width=1pt,color=colorA,fill=colorA,fill opacity=0.36] (5.715889278449259,-9.60027135021257) -- (7.963099852875052,-0.948510638673266) -- (4.9293655774002305,1.0964509840542056) -- (2.749571320207211,0.26498307151666217) -- (1.760798667459862,-7.442949198763808) -- (3.1315971178595956,-9.2631897640487) -- (5.715889278449259,-9.60027135021257) -- (5.715889278449259,-9.60027135021257);
\fill[line width=1pt,color=colorC,fill=colorC,fill opacity=0.36] (15.468783171457208,-5.263154941570789) -- (13.850791557870636,-0.45412431229959155) -- (4.9293655774002305,1.0964509840542056) -- (2.749571320207211,0.26498307151666217) -- (4.030481347629913,-3.5328027992629276) -- (15.468783171457208,-5.263154941570789) -- (15.468783171457208,-5.263154941570789);
\fill[line width=1pt,color=colorB,fill=colorB,fill opacity=0.36] (5.715889278449259,-9.60027135021257) -- (15.468783171457208,-5.263154941570789) -- (13.850791557870636,-0.45412431229959155) -- (2.9293481661612746,-5.285627047315046) -- (1.760798667459862,-7.442949198763808) -- (3.1315971178595956,-9.2631897640487) -- (5.715889278449259,-9.60027135021257) -- (5.715889278449259,-9.60027135021257);
\draw[line width=1pt,color=colorC,fill=colorC,fill opacity=0.7] (5.715889278449259,-9.60027135021257) -- (4.7271166257019095,-7.487893410252323) -- (2.9293481661612746,-5.285627047315046) -- (1.760798667459862,-7.442949198763808) -- (3.1315971178595956,-9.2631897640487) -- (5.715889278449259,-9.60027135021257) -- (5.715889278449259,-9.60027135021257);
\draw[line width=1pt,color=colorB,fill=colorB,fill opacity=0.7] (4.030481347629913,-3.5328027992629276) -- (7.963099852875052,-0.948510638673266) -- (4.9293655774002305,1.0964509840542056) -- (2.749571320207211,0.26498307151666217) -- (4.030481347629913,-3.5328027992629276) -- (4.030481347629913,-3.5328027992629276);
\draw[line width=1pt,color=colorA,fill=colorA,fill opacity=0.7] (15.468783171457208,-5.263154941570789) -- (13.850791557870636,-0.45412431229959155) -- (11.15413886855968,-1.959755397164873) -- (11.356387820258004,-4.04966123138086) -- (15.468783171457208,-5.263154941570789) -- (15.468783171457208,-5.263154941570789);
\begin{scriptsize}
    \draw [fill=colorB] (4.030481347629913,-3.5328027992629276) circle (2.5pt);
    \draw [fill=colorB] (4.9293655774002305,1.0964509840542056) circle (2.5pt);
    \draw [fill=colorA] (11.356387820258004,-4.04966123138086) circle (2.5pt);
    \draw [fill=colorB] (2.749571320207211,0.26498307151666217) circle (2.5pt);
    \draw [fill=colorB] (7.963099852875052,-0.948510638673266) circle (2.5pt);
    \draw [fill=colorA] (13.850791557870636,-0.45412431229959155) circle (2.5pt);
    \draw [fill=colorA] (11.15413886855968,-1.959755397164873) circle (2.5pt);
    \draw [fill=colorA] (15.468783171457208,-5.263154941570789) circle (2.5pt);
    \draw [fill=colorC] (3.1315971178595956,-9.2631897640487) circle (2.5pt);
    \draw [fill=colorC] (4.7271166257019095,-7.487893410252323) circle (2.5pt);
    \draw [fill=colorC] (5.715889278449259,-9.60027135021257) circle (2.5pt);
    \draw [fill=colorC] (2.9293481661612746,-5.285627047315046) circle (2.5pt);
    \draw [fill=colorC] (1.760798667459862,-7.442949198763808) circle (2.5pt);
\end{scriptsize}
\end{tikzpicture}

  \caption{ }
  \label{fig:colorful_counteraxample}
  \end{subfigure}
  \caption{
    (a) three convex sets $S_1$ (blue), $S_2$ (green) and $S_3$ (orange) in $\R^2$ and their colorful interior (white). The sets $(\widehat{S}_i)_{1\leq i\leq 3}$ (resp.\ $(\overline{S}_i)_{1\leq i\leq 3}$) are seen by taking convex hulls of $(S_i)_{1\leq i\leq n}$ (resp.\ intersection of $(\widehat{S}_i)_{1\leq i \leq 3}$) pairwise. Observe that the edges of the colorful interior are supported by tangent hyperplanes to two sets of $(S_1,S_2,S_3)$.\\
    (b) the colorful interior of $(S_1,S_2,S_3)$ is here empty, although these sets are separated (any three points in each of them are in general position), contrary to the sets $(\overline{S}_1,\overline{S}_2,\overline{S}_3)$, whose intersection is seen in the center of the figure.
  }
\end{figure}
  
Given a collection $\Scal = (S_1, \dots, S_n)$ of $n$ closed convex sets of $\R^{n-1}$, 
we now discuss necessary and sufficient conditions for $\arcenciel \Scal$ to be nonempty. To this purpose we recall that the collection $\Scal$ is \emph{separated} if for any choice of $k \leq n$ points $x_1, \dots, x_k$ in $S_{i_1}\times\dots\times S_{i_k}$ (where $i_1, \dots, i_k$ are pairwise distinct), the points $x_1,\dots,x_k$ are in general position (spanning a $(k-1)$-dimensional affine space).

\begin{proposition}\label{prop:equivHatBar}
  Let $S_1,\dots,S_n$ be a collection of $n$ compact convex sets of $\R^{n-1}$, and let us define $\widehat{S}_i\coloneqq \conv(\bigcup_{j\neq i}S_j)$ for all $i\in\onen$.\\
  Then, the family $(\overline{S}_i)_{i\in\onen}$ is separated if and only if $\bigcap_{i\in\onen} \widehat{S}_i = \emptyset$.
\end{proposition}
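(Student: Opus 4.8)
The plan is to establish the two implications separately, each via its contrapositive; together they amount to the single equivalence $\bigcap_{i\in\onen}\widehat{S}_i \neq \emptyset \iff (\overline{S}_i)_{i\in\onen}$ is not separated. I will use throughout that $\overline{S}_i = \bigcap_{j\neq i}\widehat{S}_j$, so that every $\widehat{S}_i$ and every $\overline{S}_i$ is compact and convex, that $S_i\subseteq\overline{S}_i$ for all $i$, and that $\bigcap_{i\in\onen}\widehat{S}_i = \bigcap_{i\in\onen}\overline{S}_i$.

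For the ``only if'' part I prove the contrapositive. Assume $\bigcap_{i\in\onen}\widehat{S}_i$ contains a point $z$. Then $z\in\widehat{S}_j$ for every $j\in\onen$, hence $z\in\bigcap_{j\neq i}\widehat{S}_j=\overline{S}_i$ for every $i\in\onen$; in particular (taking $n\geq 2$, the case $n=1$ being trivial) $z$ lies in both $\overline{S}_1$ and $\overline{S}_2$. The two points $x_1=z\in\overline{S}_1$ and $x_2=z\in\overline{S}_2$ are then not distinct, hence not in general position, which shows that $(\overline{S}_i)_{i\in\onen}$ is not separated.

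For the ``if'' part I again argue by contraposition: suppose $(\overline{S}_i)_{i\in\onen}$ is not separated, so there exist pairwise distinct indices $i_1,\dots,i_k\in\onen$ and points $x_{i_1}\in\overline{S}_{i_1},\dots,x_{i_k}\in\overline{S}_{i_k}$ that are affinely dependent, say $\sum_{r=1}^k\alpha_r=0$ and $\sum_{r=1}^k\alpha_r x_{i_r}=0$ with the $\alpha_r$ not all zero. Splitting the indices according to the sign of $\alpha_r$, as in the proof of Radon's theorem, produces two nonempty disjoint sets $A,B\subseteq\{1,\dots,k\}$ and a point $p$ which is simultaneously a convex combination of $\{x_{i_r}: r\in A\}$ and of $\{x_{i_r}: r\in B\}$. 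I claim that $p\in\widehat{S}_l$ for every $l\in\onen$, which yields $p\in\bigcap_{l\in\onen}\widehat{S}_l\neq\emptyset$ and completes the proof. To prove the claim, fix $l\in\onen$: because the $i_r$ are pairwise distinct, at most one of them equals $l$, so at least one of the two groups $A,B$, say $A$, contains no index $r$ with $i_r=l$. For each $r\in A$ we then have $l\neq i_r$, hence $x_{i_r}\in\overline{S}_{i_r}=\bigcap_{m\neq i_r}\widehat{S}_m\subseteq\widehat{S}_l$; since $\widehat{S}_l$ is convex and $p$ is a convex combination of these $x_{i_r}$, we get $p\in\widehat{S}_l$.

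I do not anticipate a genuine obstacle: the argument is short, and the only point that needs care is the bookkeeping of the index $l$ that may coincide with one of the witness indices $i_r$, which is exactly why the two-sided Radon splitting is used rather than a bare application of Carath\'eodory's theorem. The degenerate situation where some $\overline{S}_i$ is empty requires no special treatment, since then $\bigcap_{j\in\onen}\widehat{S}_j\subseteq\overline{S}_i=\emptyset$ and no witness of non-separation can use that set, so both contrapositive arguments above go through unchanged.
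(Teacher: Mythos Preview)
Your proof is correct, and it takes a genuinely different route from the paper's. The paper relies on the alternative characterization of separation via separating hyperplanes (every two disjoint subcollections can be separated by an affine hyperplane), citing~\cite{barany2008slicing}: for the direction $\bigcap_i\widehat{S}_i=\emptyset\Rightarrow(\overline{S}_i)$ separated, it separates $\bigcap_{i\in I}\widehat{S}_i$ from $\bigcap_{j\in J}\widehat{S}_j$ for every partition $(I,J)$ and then observes that $\conv\big(\bigcup_{i\in I}\overline{S}_i\big)\subset\bigcap_{j\in J}\widehat{S}_j$; for the converse it separates $\overline{S}_i$ from the rest and notes $\conv\big(\bigcup_{k\neq i}\overline{S}_k\big)=\widehat{S}_i$, so $\overline{S}_i\cap\widehat{S}_i=\bigcap_j\widehat{S}_j=\emptyset$.

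Your argument bypasses the hyperplane characterization entirely. The ``only if'' contrapositive is immediate from $\bigcap_i\widehat{S}_i=\bigcap_i\overline{S}_i$. The ``if'' contrapositive is the interesting part: the Radon-type splitting of an affinely dependent witness, combined with the observation that $\overline{S}_{i_r}\subset\widehat{S}_l$ whenever $l\neq i_r$, lets you place the Radon point in every $\widehat{S}_l$. This is more elementary and fully self-contained (no external lemma on the hyperplane reformulation of separation), and the key bookkeeping point---that at most one index $i_r$ can equal $l$, so one side of the Radon split avoids it---is handled cleanly. The paper's approach, in exchange, makes the geometry more visible (one literally sees the separating hyperplanes that will later become the facets of the simplex in Theorem~\ref{th:colint}).
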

\if\additionalproofs1

\begin{proof}
  Recall that the collection $\Scal$ is separated if and only if for all $k+\ell\leq n$, any two disjoint $k$-subcollection and $\ell$-subcollection of $\mathcal{S}$ can be separated by an affine hyperplane (this result, already stated in~\cite{barany2008slicing} for example, is easy to show).

  We first suppose that $\bigcap_{i\in\onen}\widehat{S}_i=\emptyset$, then for all partition $(I,J)$ of $\onen$, we have $\big(\bigcap_{i\in I}\widehat{S}_i\big)\cap\big(\bigcap_{j\in J}\widehat{S}_j\big)=\emptyset$. The convex sets separation theorem ensures that there exists an affine hyperplane $H$ separating the convex sets $\bigcap_{i\in I}\widehat{S}_i$ and $\bigcap_{j\in J}\widehat{S}_j$. However, for all $j\in J$, $\overline{S}_{I}\coloneqq \conv\big((\overline{S}_{i})_{i\in I}\big)\subset \widehat{S}_j$, as a result $\overline{S}_{I}\subset\bigcap_{j\in J}\widehat{S}_j$. In particular, $H$ separates the two subcollections $(\overline{S}_{i})_{i\in I}$ and $(\overline{S}_{j})_{j\in J}$. If $(I',J')$ are two disjoint subcollections of $\onen$ such that $I'\cup J'\neq\onen$, we can still complete $(I',J')$ in a partition of the form $(I,J)$ to get as above a hyperplane $H$ separating  $(\overline{S}_{i})_{i\in I'}$ and $(\overline{S}_{j})_{j\in J'}$. From the proposition we have recalled on the separation property, the collection $(\overline{S}_{i})_{i\in\onen}$ is separated.

Conversely, the separation of $(\overline{S}_{i})_{i\in\onen}$ implies that for all $i\in\onen$, there exists some affine hyperplane separating  $\overline{S}_{i}$ from the collection $(\overline{S}_{k})_{k\neq i}$. By convexity, this hyperplanes separates as well $\overline{S}_{i}$ and $\conv \big(\bigcup_{k\neq i}\overline{S}_{k}\big)=\widehat{S}_i$. As a result $\overline{S}_{i}\cap\widehat{S}_i=\emptyset$, or equivalently $\bigcap_{i\in\onen}\widehat{S}_i=\emptyset$.
\end{proof}
\fi

\begin{proposition}\label{prop:barSeparated}
  Let $S_1,\dots,S_n$ be a collection of $n$ compact convex sets of $\R^{n-1}$. Let us define, for all $i\in\onen$, 
  \[ 
  \overline{S}_i \coloneqq \mediumcap{j\neq i} \conv \big( \mediumcup{k\neq j} S_k \big) \, . 
  \]    
  Then, if $\arcenciel{\Scal}$ is nonempty, the family $(\overline{S}_i)_{i\in\onen}$ is separated.
  \end{proposition}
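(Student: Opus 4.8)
The plan is to prove the contrapositive in the spirit of Proposition~\ref{prop:equivHatBar}: assuming that $(\overline{S}_i)_{i\in\onen}$ is \emph{not} separated, I will exhibit a point common to all the $\widehat{S}_i = \conv(\bigcup_{j\neq i}S_j)$, and then argue that $\bigcap_{i\in\onen}\widehat{S}_i \neq\emptyset$ forces $\arcenciel{\Scal}$ to be empty. The second implication is essentially immediate from the definitions: by Definition~\ref{def:colorfulInterior} (in the cross-section form~\eqref{eq:cross_section}), $\arcenciel{\Scal}$ is obtained from $\conv(S_1\cup\dots\cup S_n)$ by removing $\bigcup_{i\in\onen}\conv(\bigcup_{j\neq i}S_j) = \bigcup_{i\in\onen}\widehat{S}_i$; but of course $\bigcap_i \widehat S_i \subseteq \bigcup_i \widehat S_i$ and $\widehat S_i \subseteq \conv(S_1\cup\dots\cup S_n)$, so if the intersection of the $\widehat{S}_i$ is nonempty, every point of it lies in $\conv(\Scal)$ yet is removed, and in fact the whole of $\conv(\Scal)$ is covered once one notes that any point of $\conv(\Scal)$ already lies in some $\widehat S_i$ by Carath\'eodory (at most $n-1$ of the sets $S_i$ are needed to express a point of an $(n-1)$-dimensional convex hull), hence $\arcenciel{\Scal}=\emptyset$.

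For the first, combinatorial half, suppose $(\overline{S}_i)_{i\in\onen}$ is not separated. By the separation characterization recalled in the proof of Proposition~\ref{prop:equivHatBar} (two disjoint subcollections of a separated family can always be separated by an affine hyperplane, and conversely), there exist disjoint subsets $I,J\subseteq\onen$ and a point $p$ lying simultaneously in $\conv\big(\bigcup_{i\in I}\overline{S}_i\big)$ and $\conv\big(\bigcup_{j\in J}\overline{S}_j\big)$, with $|I|+|J|\le n$. Here the crucial structural input is the nesting $\overline{S}_i\subseteq\widehat{S}_k$ for every $k\neq i$: indeed $\overline{S}_i = \bigcap_{j\neq i}\widehat{S}_j$, so in particular $\overline{S}_i\subseteq\widehat{S}_k$ whenever $k\neq i$. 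I will complete $(I,J)$ to a partition $(I',J')$ of $\onen$ (throw the leftover indices into, say, $J'$) and check that $p$ still lies in $\conv(\bigcup_{i\in I'}\overline{S}_i)\cap\conv(\bigcup_{j\in J'}\overline{S}_j)$. For an arbitrary index $k\in\onen$, either $k\in I'$, in which case every $\overline S_j$ with $j\in J'$ has $j\neq k$ so $\overline S_j\subseteq\widehat S_k$ and hence $p\in\conv(\bigcup_{j\in J'}\overline S_j)\subseteq\widehat S_k$; or $k\in J'$, and symmetrically $p\in\conv(\bigcup_{i\in I'}\overline S_i)\subseteq\widehat S_k$. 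Either way $p\in\widehat S_k$, so $p\in\bigcap_{k\in\onen}\widehat{S}_k$, which is therefore nonempty; by the previous paragraph, $\arcenciel{\Scal}=\emptyset$. This contradicts the hypothesis, so $(\overline S_i)_{i\in\onen}$ is separated.

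I expect the main obstacle to be bookkeeping around the degenerate cases of the completion step --- in particular ensuring that when $|I|+|J|<n$ the leftover colors can indeed be absorbed into one side without destroying the property $p\in\conv(\bigcup\overline S_i)\cap\conv(\bigcup\overline S_j)$ (they can, since enlarging a convex hull only enlarges it), and making sure the separation characterization from Proposition~\ref{prop:equivHatBar} is invoked in exactly the right direction. A minor point worth stating carefully is that compactness of the $S_i$ guarantees the $\widehat S_i$ and $\overline S_i$ are themselves compact convex sets, so the convex separation theorem applies without topological pathology. Alternatively, once $\bigcap_i\widehat S_i\neq\emptyset$ is shown, one could finish by directly quoting Proposition~\ref{prop:equivHatBar}, which states that $\bigcap_i\widehat S_i=\emptyset$ is equivalent to separation of $(\overline S_i)_{i\in\onen}$ --- this shortens the write-up, at the cost of chaining through that proposition.
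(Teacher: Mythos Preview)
Your overall architecture matches the paper: reduce to showing $\bigcap_{i\in\onen}\widehat{S}_i\neq\emptyset \implies \arcenciel{\Scal}=\emptyset$, and handle the link between separation of $(\overline{S}_i)_{i\in\onen}$ and $\bigcap_i\widehat{S}_i=\emptyset$ via \Cref{prop:equivHatBar}. Your ``combinatorial half'' (non-separation $\Rightarrow$ common point in all $\widehat{S}_k$) is fine and is just a rederivation of one direction of \Cref{prop:equivHatBar}.

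The genuine gap is in what you call the ``second implication''. You assert that ``any point of $\conv(\Scal)$ already lies in some $\widehat{S}_i$ by Carath\'eodory (at most $n-1$ of the sets $S_i$ are needed \ldots)''. This is false: in $\R^{n-1}$ Carath\'eodory gives at most $n$ points, not $n-1$, and those $n$ points may perfectly well come one from each $S_i$. If your claim were true it would not use the hypothesis $\bigcap_i\widehat{S}_i\neq\emptyset$ at all and would force $\arcenciel{\Scal}=\emptyset$ unconditionally, contradicting, e.g., Figure~\ref{fig:colorful1}. The Carath\'eodory count $n-1$ is valid only for points on a supporting hyperplane of $\conv(\Scal)$ (that is the argument in \Cref{lemma:open}), not for interior points.

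The paper closes exactly this gap by actually using the common point $x_0\in\bigcap_i\widehat{S}_i$: for an arbitrary $x\in\conv(\Scal)$, shoot the half-line from $x_0$ through $x$ until it exits $\conv(\Scal)$ at some boundary point $x'$; then $x'$ lies in a supporting hyperplane, so the $(n-1)$-point Carath\'eodory argument from \Cref{lemma:open} gives $x'\in\widehat{S}_k$ for some $k$. Since $x_0\in\widehat{S}_k$ as well and $x\in[x_0,x']$, convexity yields $x\in\widehat{S}_k$. That is where the hypothesis enters, and without it the implication simply does not hold. Once you insert this ray-to-the-boundary step in place of your Carath\'eodory sentence, the rest of your write-up goes through.
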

\if\additionalproofs1
  
  \begin{proof}
    We still denote $\widehat{S}_i \coloneqq \conv(\bigcup_{j\neq i} S_j)$. We will show by contraposition that $\arcenciel{\Scal}\neq\emptyset\Longrightarrow \bigcap_{i\in\onen}\widehat{S}_i=\emptyset$. Thus, suppose we have $x_0$ such that for all $i\in\onen$, $x_0 \in \widehat{S}_i$.
    Let $x\in\conv(\Scal)$. If $x = x_0$, it is clear that $x\notin\arcenciel{\Scal}$.
    Otherwise, consider the halfline $[x_0, x)$, which is not entirely included in $\conv(\Scal)$ because all the $(S_i)_{i\in\onen}$ are bounded. We define $x'\coloneqq \max_{z\in[x_0,x)}\{z \in \conv(\Scal)\}$, in the sense of the order induced on $[x_0,x)$. Note that the maximum is indeed attained by closedness of the $(S_i)_{i\in\onen}$. By definition, we have $x\in[x_0,x']$ and $x'\in\partial\conv(\Scal)$. The latter ensures that $x\in\conv(\bigcup_{i\in\onen}(S_i\cap H))$ where $H$ is a supporting hyperplane to $\conv(\Scal)$. Hence, from Carathéodory's theorem, there is $k\in\onen$ such that $x'\in \widehat{S}_k$ (see the proof of~\Cref{lemma:open} where it is also done). But by assumption, $x_0$ is in $\widehat{S}_k$ as well and so does $x$ by convexity. We have thus just proved that for all $x\in\conv(\Scal)$, $x\in\widehat{S}_k$ for some $k\in\onen$, which implies that $\arcenciel{\Scal}=\emptyset$. \Cref{prop:equivHatBar} terminates the proof.
  \end{proof}
  \fi 
  
\Cref{prop:barSeparated} provides a necessary condition to ensure that $\arcenciel{\Scal}\neq\emptyset$. Since for all $i\in\onen$, we have $S_i\subset\overline{S}_i$, we also obtain that the separation of $(S_i)_{i\in\onen}$ is necessary as well for $\arcenciel{\Scal}$ to be nonempty. However, \Cref{fig:colorful_counteraxample} shows that this last condition is not sufficient. We conjecture that the necessary condition stated in \Cref{prop:barSeparated} is sufficient:
\begin{conjecture}\label{conj}
Let $S_1,\dots,S_n$ be a collection of $n$ compact convex sets of $\R^{n-1}$. Then $\arcenciel{\Scal}$ is nonempty if and only if the family $(\overline{S}_i)_{i\in\onen}$ is separated.
\end{conjecture}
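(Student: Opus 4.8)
The plan is to establish the non-trivial implication, namely that separation of $(\overline{S}_i)_{i\in\onen}$ forces $\arcenciel{\Scal}\neq\emptyset$; the converse is already \Cref{prop:barSeparated}. Throughout I write $\widehat{S}_i\coloneqq\conv(\bigcup_{j\neq i}S_j)$, as in \Cref{prop:equivHatBar}, and note that unwinding the definition~\eqref{eq:cross_section} of the colorful interior gives $\arcenciel{\Scal}=\conv(\Scal)\setminus\bigcup_{i\in\onen}\widehat{S}_i$, with $\bigcup_i\widehat{S}_i\subseteq\conv(\Scal)$ in all cases. First I would extract two consequences of the hypothesis that $(\overline{S}_i)_{i\in\onen}$ is separated. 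Since $S_i\subseteq\overline{S}_i$, the family $(S_i)_{i\in\onen}$ is itself separated, so for any choice $x_i\in S_i$ the points $x_1,\dots,x_n$ are affinely independent; hence $\Delta\coloneqq\conv\{x_1,\dots,x_n\}$ is a genuine $(n-1)$-simplex, contained in $\conv(\Scal)$ (this also shows $\conv(\Scal)$ is full-dimensional, so no separate treatment of a degenerate case is needed). Moreover, by \Cref{prop:equivHatBar}, the hypothesis is equivalent to $\bigcap_{i\in\onen}\widehat{S}_i=\emptyset$.

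The core of the argument is then a Knaster--Kuratowski--Mazurkiewicz (KKM) argument on $\Delta$, the decisive trick being to relabel the $\widehat{S}_i$ through a cyclic permutation. Assume, for contradiction, that $\arcenciel{\Scal}=\emptyset$, i.e.\ $\conv(\Scal)=\bigcup_i\widehat{S}_i$; then in particular $\Delta\subseteq\bigcup_i\widehat{S}_i$. Let $\tau$ be the cyclic shift on $\onen$ ($\tau(i)=i+1$ for $i<n$, $\tau(n)=1$), and set $F_i\coloneqq\widehat{S}_{\tau(i)}\cap\Delta$, a closed subset of $\Delta$. I claim the KKM covering condition holds for the labelled simplex $(x_1,\dots,x_n)$ and the closed sets $(F_i)_{i\in\onen}$: for every $\emptyset\neq I\subseteq\onen$, $\conv\{x_j:j\in I\}\subseteq\bigcup_{i\in I}F_i$. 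The key observation is that for any index $k\notin I$ one has $x_j\in S_j\subseteq\widehat{S}_k$ for every $j\in I$ (since $j\neq k$), hence $\conv\{x_j:j\in I\}\subseteq\widehat{S}_k$ by convexity. When $I$ is a proper nonempty subset, $\tau(I)\neq I$ because $\tau$ is an $n$-cycle, and $|\tau(I)|=|I|$ because $\tau$ is a bijection, so $\tau(I)\setminus I\neq\emptyset$; picking $k\in\tau(I)\setminus I$, say $k=\tau(i_0)$ with $i_0\in I$, yields $\conv\{x_j:j\in I\}\subseteq\widehat{S}_k\cap\Delta=F_{i_0}\subseteq\bigcup_{i\in I}F_i$. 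For $I=\onen$ the condition reads $\Delta\subseteq\bigcup_i\widehat{S}_i$, which holds by assumption; for a singleton $I=\{i\}$ it reads $x_i\in\widehat{S}_{\tau(i)}$, true since $\tau(i)\neq i$. The KKM lemma then produces a point in $\bigcap_{i\in\onen}F_i$, hence in $\bigcap_{i\in\onen}\widehat{S}_{\tau(i)}=\bigcap_{i\in\onen}\widehat{S}_i$, contradicting $\bigcap_i\widehat{S}_i=\emptyset$. Therefore $\arcenciel{\Scal}\neq\emptyset$, which completes the proof.

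The step I expect to be the main obstacle is the design of this KKM instance. The obvious labelling $F_i=\widehat{S}_i$ fails the face condition, since a face $\conv\{x_j:j\in I\}$ lies in $\widehat{S}_k$ only for colors $k$ \emph{outside} $I$; relabelling by an $n$-cycle is what converts the requirement into ``$\tau(I)\not\subseteq I$ for every proper $I$'', which is precisely what the cyclic structure supplies. The remaining points are routine but should be checked carefully: one must assume the $S_i$ nonempty (if some $S_i=\emptyset$ then $\conv(\Scal)=\widehat{S}_i$, whence $\arcenciel{\Scal}\subseteq\widehat{S}_i\setminus\widehat{S}_i=\emptyset$, so the statement is only meaningful in that case), and $n=1$ is immediate ($\widehat{S}_1=\conv(\emptyset)=\emptyset$ and $\arcenciel{\Scal}=S_1$).
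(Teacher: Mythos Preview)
Your argument appears to be correct and, remarkably, settles the full conjecture, whereas the paper leaves it open and only establishes the case $n=3$ (via a concrete planar construction with tangent lines and cross products). I checked each step: the inclusion $S_i\subseteq\overline{S}_i$ gives separation of $(S_i)_i$, hence a genuine $(n-1)$-simplex $\Delta$; the face condition for proper $I$ reduces to finding some $k\notin I$ with $k\in\tau(I)$, which the $n$-cycle guarantees; the case $I=\onen$ is exactly the contradiction hypothesis; and the $\widehat{S}_k$ are compact (convex hulls of finite unions of compacta), so the $F_i$ are closed and KKM applies. The conclusion $\bigcap_i\widehat{S}_i\neq\emptyset$ then contradicts \Cref{prop:equivHatBar}.

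The contrast with the paper's approach is instructive. The paper's proof for $n=3$ is constructive: it exhibits the three separating tangent lines $H_i$ promised by~\cite{Le96}, computes their pairwise intersections $s_i$, shows each $s_i\in\overline{S}_i\subset H_i^{>}$, and outputs the barycenter $(s_1+s_2+s_3)/3$ as an explicit colorful point. This ties in with \Cref{th:colint} and would, in principle, extend to higher $n$ if one could control the common tangent hyperplanes, but the combinatorics of which intersections land in which $\overline{S}_i$ becomes delicate. Your route is non-constructive and topological: it never locates a colorful point, only derives a contradiction from the covering $\conv(\Scal)=\bigcup_i\widehat{S}_i$ via KKM. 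The decisive idea---relabelling by an $n$-cycle so that the ``wrong-way'' inclusion $\conv\{x_j:j\in I\}\subseteq\widehat{S}_k$ for $k\notin I$ becomes a KKM face condition---is precisely the kind of combinatorial-topology trick that a geometric attack would not suggest. What you lose is an explicit witness (relevant for the algorithmic concerns at the end of \Cref{sec:colorful}); what you gain is a short, dimension-free proof.

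Two minor points worth tightening in a final write-up: make the nonemptiness assumption on the $S_i$ explicit up front (as you note, the statement is vacuous otherwise), and perhaps remark that compactness of the $S_i$ is used only to ensure the $\widehat{S}_k$, hence the $F_i$, are closed.
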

We prove this conjecture in the case where $n = 3$ (it is also straightforward to establish for $n=2$).
\begin{proposition}
Let $\Scal = (S_1, S_2, S_3)$ be a collection of three convex compact sets of $\R^2$. Then, $\arcenciel{\Scal}$ is nonempty if and only if $(\overline{S}_1, \overline{S}_2, \overline{S}_3)$ is separated.  
\end{proposition}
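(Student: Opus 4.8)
The reverse implication, that $\arcenciel{\Scal}\neq\emptyset$ forces $(\overline S_1,\overline S_2,\overline S_3)$ to be separated, is already \Cref{prop:barSeparated}, so the plan is to establish the forward one. Keeping the notation $\widehat S_i\coloneqq\conv\big(\bigcup_{j\neq i}S_j\big)$ of \Cref{prop:equivHatBar}, one has directly from the definition of the colorful interior that $\arcenciel{\Scal}=K\setminus(\widehat S_1\cup\widehat S_2\cup\widehat S_3)$ with $K\coloneqq\conv(S_1\cup S_2\cup S_3)$, while by \Cref{prop:equivHatBar} separatedness of $(\overline S_i)_i$ is equivalent to $\widehat S_1\cap\widehat S_2\cap\widehat S_3=\emptyset$. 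So, assuming this intersection empty, the goal is to exhibit a point of $K$ lying in no $\widehat S_i$. (I take the $S_i$ nonempty, as is implicit in this setting; if some $S_i$ were empty then $\widehat S_i=K$, hence $\arcenciel{\Scal}=\emptyset$.)

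The core of the argument would be a Knaster--Kuratowski--Mazurkiewicz (KKM) argument on a triangle. Since $S_i\subset\overline S_i$ for every $i$, separatedness of $(\overline S_i)_i$ implies separatedness of $(S_i)_i$; therefore, picking any $p_i\in S_i$, the points $p_1,p_2,p_3$ are affinely independent, so that $T\coloneqq\conv\{p_1,p_2,p_3\}$ is a non-degenerate triangle. I would label its vertices by the three closed sets $\widehat S_i$ (each $\widehat S_i$ is compact, being the convex hull of a compact set) through the cyclic derangement $A_1\coloneqq\widehat S_2$, $A_2\coloneqq\widehat S_3$, $A_3\coloneqq\widehat S_1$. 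The KKM incidence conditions then hold: on vertices, $p_1\in S_1\subset\conv(S_1\cup S_3)=\widehat S_2=A_1$, and cyclically $p_2\in A_2$, $p_3\in A_3$; on edges, $[p_i,p_j]\subset\conv(S_i\cup S_j)=\widehat S_k$ where $k$ is the index not in $\{i,j\}$, and one checks that for each of the three edges this $\widehat S_k$ is one of $A_i$ or $A_j$, so $[p_i,p_j]\subset A_i\cup A_j$.

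It remains to verify the last KKM hypothesis, $T\subset A_1\cup A_2\cup A_3$; this is where one argues by contradiction. If $\arcenciel{\Scal}$ were empty, then $K=\widehat S_1\cup\widehat S_2\cup\widehat S_3=A_1\cup A_2\cup A_3$, and since $T\subset K$ the hypothesis would follow. The KKM lemma would then produce a point of $A_1\cap A_2\cap A_3=\widehat S_1\cap\widehat S_2\cap\widehat S_3$, contradicting the assumed emptiness of that intersection. Hence $\arcenciel{\Scal}\neq\emptyset$, which establishes the forward implication and finishes the proof.

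The main conceptual step is to recognize the statement as an instance of KKM with the correct cyclic labeling of the vertices; the incidence verifications are then routine. The one point needing genuine care --- and the reason one first upgrades separatedness of $(\overline S_i)_i$ to separatedness of $(S_i)_i$ --- is that the KKM lemma has to be applied to a truly non-degenerate triangle $T$; I expect this, together with the empty-set corner case, to be the only place where some attention is required.
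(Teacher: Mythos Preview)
Your argument is correct and genuinely different from the paper's proof. The paper proceeds constructively: invoking the existence of common tangent lines to the $\overline{S}_j$'s (from~\cite{Le96}), it selects for each $i$ a line $H_i$ tangent to the two $S_j$ with $j\neq i$ that separates them from $S_i$, computes the three pairwise intersection points $s_k=H_i\cap H_j$ via cross products in $\mathbb{P}^2$, checks that each $s_k\in\overline{S}_k\subset H_k^{>}$, and concludes that the barycenter $(s_1+s_2+s_3)/3$ lies in $\arcenciel{\Scal}$. Your route is topological: a contradiction via the KKM lemma on a single rainbow triangle $T$, with the cyclic labeling $A_i=\widehat{S}_{\sigma(i)}$ for the $3$-cycle $\sigma$. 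The incidence checks are indeed routine, and the only delicate point---non-degeneracy of $T$---is exactly the place where you use that separatedness of $(\overline S_i)_i$ descends to $(S_i)_i$.

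What each approach buys: yours is shorter, self-contained (no appeal to common-support results), and makes transparent why $n=3$ is special---the derangement you need must avoid fixing any proper face of the simplex, and only $3$-cycles do that; for $n\geq 4$ every derangement stabilises some face setwise, which blocks the KKM covering condition and is consistent with the conjecture remaining open. The paper's approach, on the other hand, actually exhibits a colorful point and ties the result to the tangent-hyperplane description of $\arcenciel{\Scal}$ from \Cref{th:colint}, which is useful structurally. One small cosmetic point: when you invoke KKM, the closed sets should be subsets of $T$, so strictly speaking you apply it to $A_i\cap T$; this changes nothing, since the common point you obtain still lies in $\widehat S_1\cap\widehat S_2\cap\widehat S_3$.
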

\begin{proof}
  Suppose that $(\overline{S}_1, \overline{S}_2, \overline{S}_3)$ is separated. We know 
  from~\cite{Le96} that for all $i\in\{1,2,3\}$ we have two hyperplanes (in this case affine lines) tangent to sets of the collection $(\overline{S}_j)_{j\neq i}$ and inducing opposite orientation on these. Such lines cannot meet $\overline{S}_i$ by separation property, so one of them, denoted $H_i$, is such that $\overline{S}_i \subset H_i^>$ and $\overline{S}_j \subset H_i^{\leq}$ for $j\neq i$. In particular, note that $\conv((S_j)_{j\neq i})\subset H_i^{\leq}$.
   For $i,j\in\{1,2,3\}$ and $j\neq i$, the hyperplane $H_i$ is not only tangent to $\overline{S}_j$ but also to $S_j$: indeed take a support $y^j_i$ of $H_i$ in $\overline{S}_j$, it arises as a convex combination $y^j_i=\sum_{k\neq i}\lambda_k x_k$ with $x_i\in S_k$ for $y^j_i\in \widehat{S_i}$. By $S_i\subset \overline{S}_i$, we derive for all $k\neq i$, $x_k\in H_k$ or $\lambda_k=0$, the latter being ruled out by separation. Hence, let us denote by $x^j_i$ a support of hyperplane $H_i$ in $S_j$. Note that once again from the separation of $(\overline{S}_1, \overline{S}_2, \overline{S}_3)$, two supports of a tangent line in two different colors cannot be equal.

  If $x\coloneqq(a, b)^{T}$ and $y\coloneqq(a', b')^{T}$ are two distinct vectors of $\R^2$, we denote $x\wedge y \coloneqq (ab'-a'b)^{-1}(b-b', a'-a)^{T}$, the usual cross-product of two vectors in $\mathbb{P}^2$. As is customary, $h_1 \coloneqq x_1^2 \wedge x_1^3$ (resp.\ $h_2 \coloneqq x_2^3 \wedge x_2^1$ and $h_3 \coloneqq x_3^1 \wedge x_3^2$) is a normal vector to $H_1$ (resp.\ $H_2$ and $H_3$), and $\dotp{h_i}{x} + 1 = 0$ is an equation defining $H_i$.
  Furthermore, the intersection of $H_1$ and $H_2$ is given by $s_3\coloneqq h_1\wedge h_2$, or using the triple product formula, by

  \begin{equation}\label{eq:decomposition_cross}
    s_3 =  h_1 \wedge (x_2^3 \wedge x_2^1) = \frac{(\dotp{h_1}{x_2^1}+1)\, x_2^3 - (\dotp{h_1}{x_2^3}+1)\, x_2^1}{(\dotp{h_1}{x_2^1}+1) \hskip2.7ex  - (\dotp{h_1}{x_2^3}+1) \hskip2.7ex } \enspace.
\end{equation}

Because $x_2^1\in \overline{S}_1 \subset H_1^>$ and $x_2^3\in \overline{S}_3 \subset H_1^{\leq}$, we have that $\dotp{h_1}{x_2^1}+1$ is nonzero and $(\dotp{h_1}{x_2^1}+1)(\dotp{h_1}{x_2^3}+1)\leq 0$. As a result of~\eqref{eq:decomposition_cross}, $s_3$ indeed exists and arises as a convex combination of $x_2^3$ and $x_2^1$, so $s_3\in \conv({S}_1\cup {S}_3)$. By writing $s_3 = (x_1^2\wedge x_1^3)\wedge h_2$ as in~\eqref{eq:decomposition_cross}, we show likewise that $s_3$ is a convex combination of $x_1^2$ and $x_1^3$, thus $s_3\in\conv({S}_2 \cup {S}_3)$. This finally entails that $s_3\in \overline{S}_3$ and therefore $s_3\in H_3^>$. It now suffices to define $s_1\coloneqq h_2\wedge h_3$ and $s_2 \coloneqq h_3 \wedge h_1$ in a similar way  and consider $y= (s_1+s_2+s_3)/3$. It is clear that $y\in\conv(S_1\cup S_2\cup S_3)$, and for all $i\in\{1,2,3\}$, $y\in H_i^>$, in particular $y\notin \conv((S_j)_{j\neq i})$. As a consequence, $y$ is a colorful vector for $S_1$, $S_2$ and $S_3$.
\end{proof}

To conclude, we point out that another interesting problem is the computational complexity of determining whether the colorful interior is empty or not, in the case where the sets $S_i$ are polytopes. Remark that as a consequence of~\Cref{prop:equivHatBar}, if Conjecture~\ref{conj} holds, then we can determine if $\arcenciel \Scal$ is empty in polynomial time using linear programming. Alternatively, the problem could be tackled by studying the complexity of separating a point from the colorful interior. This is tightly linked with the computation of the tangent hyperplanes of \Cref{th:colint}, for which the status of the complexity is not well understood. 

\bibliographystyle{splncs04}

\end{document}